\documentclass[11pt]{amsart}
\usepackage{amsmath, amssymb, amscd, mathrsfs, url, pinlabel,hyperref, verbatim}
\usepackage[margin=1.25in,centering,letterpaper,dvips]{geometry}
\usepackage{color,dcpic,latexsym,graphicx,epstopdf,comment}
\usepackage[all]{xy}
\usepackage{enumitem}

\newtheorem {theorem}{Theorem}
\newtheorem {lemma}[theorem]{Lemma}
\newtheorem {proposition}[theorem]{Proposition}
\newtheorem {corollary}[theorem]{Corollary}

\newtheorem {definition}[theorem]{Definition}

\theoremstyle{remark}

\newtheorem {remark}[theorem]{Remark}

\numberwithin{equation}{section}
\numberwithin{theorem}{section}

\newlist{pcases}{enumerate}{1}
\setlist[pcases]{
  label=\bf{Case~\arabic*:}\protect\thiscase.~,
  ref=\arabic*,
  align=left,
  labelsep=0pt,
  leftmargin=0pt,
  labelwidth=0pt,
  parsep=0pt
}
\newcommand{\case}[1][]{%
  \if\relax\detokenize{#1}\relax
    \def\thiscase{}%
  \else
    \def\thiscase{~#1}%
  \fi
  \item
}

\newcommand{\Z}{\mathbb{Z}}

\newcommand{\Q}{\mathbb{Q}}

\newcommand{\rpthree}{\mathbb{RP}^3}

\newcommand{\img}{\operatorname{Im}}

\newcommand{\ab}{\operatorname{ab}}
\newcommand{\can}{\operatorname{can}}

\title{Fillings of unit cotangent bundles}
\date{}

\begin{document}

\author{Steven Sivek}
\email{ssivek@math.princeton.edu}
\address{Department of Mathematics\\Princeton University}

\author{Jeremy Van Horn-Morris}
\email{jvhm@uark.edu}
\address{Department of Mathematical Sciences\\The University of Arkansas}

\begin{abstract}
We study the topology of exact and Stein fillings of the canonical contact structure on the unit cotangent bundle of a closed surface $\Sigma_g$, where $g$ is at least 2.  In particular, we prove a uniqueness theorem asserting that any Stein filling must be s-cobordant rel boundary to the disk cotangent bundle of $\Sigma_g$.  For exact fillings, we show that the rational homology agrees with that of the disk cotangent bundle, and that the integral homology takes on finitely many possible values: for example, if $g-1$ is square-free, then any exact filling has the same integral homology and intersection form as $DT^*\Sigma_g$.
\end{abstract}

\maketitle

\section{Introduction}

The unit cotangent bundle $ST^*M$ of a Riemannian manifold $M$ is equipped with a canonical contact structure $\xi_{\can}$, given in local coordinates as the kernel of $\alpha = \sum_i p_i dq_i$.  The contact manifold $(ST^*M, \xi_{\can})$ is Stein fillable, with one filling given by the disk cotangent bundle $DT^*M$, and it is natural to ask whether other such fillings exist.  Our goal in this paper is to study the Stein fillings, and more generally the exact symplectic fillings, of $(ST^*M,\xi_{\can})$ in the case where $M=\Sigma_g$ is a surface of genus $g \geq 2$.  We will denote the contact manifold $(ST^*\Sigma_g, \xi_{\can})$ by $(Y_g,\xi_g)$.

In the cases $g=0,1$ the fillings of $(Y_g,\xi_g)$ are already understood, and we know that in fact any minimal symplectic filling must be diffeomorphic to $DT^*\Sigma_g$.  McDuff \cite{mcduff-rational-ruled} proved this for $(ST^*S^2 = \rpthree,\xi_{\can})$, and then Hind \cite{hind-rp3} showed that $DT^*S^2$ is the unique Stein filling up to Stein homotopy.  Similarly, Stipsicz \cite{stipsicz} proved that a Stein filling of the unit cotangent bundle $ST^*T^2 = T^3$ must be homeomorphic to $DT^*T^2 \cong T^2 \times D^2$, and Wendl \cite{wendl} showed that all of its minimal strong symplectic fillings are symplectically deformation equivalent to $DT^*T^2$.

For $g \geq 2$, however, no such uniqueness results for symplectic fillings are possible.  This was observed by Li, Mak, and Yasui \cite[Proposition~3.3]{lmy}, who noted that in this case McDuff \cite{mcduff-disconnected} constructed a symplectic 4-manifold which strongly fills its disconnected boundary, one of whose components is $(Y_g,\xi_g)$.  One can glue a symplectic cap with $b_2^+$ arbitrarily large to the remaining component to get an arbitrarily large filling of $(Y_g,\xi_g)$; or, as pointed out by Wendl \cite{wendl-blog}, one can even use this to construct a strong symplectic cobordism from any contact 3-manifold to $(Y_g,\xi_g)$.

Despite this, if we require the fillings in question to be exact or Stein then the situation is drastically simpler.  Our main results are the following, which appear as Theorem~\ref{thm:homotopy-equivalent} and Theorem~\ref{thm:exact-homology} respectively.
\begin{theorem} \label{thm:main-stein}
If $(W,J)$ is a Stein filling of $(Y_g,\xi_g) = (ST^*\Sigma_g, \xi_{\can})$, then $W$ is s-cobordant rel boundary to the disk cotangent bundle $DT^*\Sigma_g$.
\end{theorem}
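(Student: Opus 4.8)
The plan is to prove this by establishing that any Stein filling $W$ has the same homology, intersection form, and fundamental group as $DT^*\Sigma_g$, and then invoking the $s$-cobordism machinery. The first and most substantial step is to pin down the algebraic topology of $W$. For this I would use the symplectic/contact invariants available for $(Y_g,\xi_g)$: the contact structure $\xi_g$ is known to be (weakly, hence here strongly) fillable and its Heegaard Floer contact invariant is nonzero, so one can apply the Ozsv\'ath--Szab\'o-type inequalities together with the fact that $Y_g$ is a Seifert-fibered (indeed circle-bundle) space with well-understood $\hfp$. Concretely, since $b_1(Y_g) = 2g$ is positive and $Y_g$ bounds $DT^*\Sigma_g$ with $b_2 = 1$ and signature $0$, I expect that the adjunction and the Fr{\o}yshov-type / embedded-genus inequalities force $b_2^+(W) = b_2^-(W) = $ the corresponding values for $DT^*\Sigma_g$, i.e. $b_2^+(W)=0$ and $b_2^-(W) = 1$ (or whatever $DT^*\Sigma_g$ realizes), and that $b_1(W)$ is forced by a half-lives-half-dies argument together with the structure of $H_*(Y_g)$. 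Combined with the exactness of the filling (which kills $H_2$ rationally via the symplectic form pairing and Stokes), one gets that $W$ has the rational homology of $DT^*\Sigma_g$; upgrading to integral homology in the Stein case should come from a closer analysis, possibly using that a Stein domain has the homotopy type of a $2$-complex so $H_2(W)$ is free and $H_{>2}(W)=0$.

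Next I would identify the fundamental group. A Stein domain of (real) dimension $4$ deformation retracts onto a CW complex of dimension $\le 2$, and the inclusion $\partial W = Y_g \hookrightarrow W$ should be shown to induce a surjection on $\pi_1$ (this is standard for the boundary of a Stein or even just a handlebody without $3$- and $4$-handles) — actually one needs $\pi_1(Y_g) \to \pi_1(W)$ to be an isomorphism. The group $\pi_1(Y_g) = \pi_1(ST^*\Sigma_g)$ is a central extension of $\pi_1(\Sigma_g)$ by $\Z$, and it is a well-known, rather rigid group (it has nontrivial center generated by the fiber class, whose order relation is the Euler number $2-2g$). I would argue that any quotient of $\pi_1(Y_g)$ compatible with the homological constraints already computed must be $\pi_1(Y_g)$ itself; the key input is that $H_1(W) \cong H_1(DT^*\Sigma_g) = H_1(\Sigma_g) = \Z^{2g}$ forces the fiber class to survive appropriately and that $\pi_1(\Sigma_g)$ is Hopfian/cohopfian-type rigidity rules out proper quotients. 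Establishing $\pi_1(W) \cong \pi_1(DT^*\Sigma_g)$ — equivalently that the filling does not collapse the fiber or any part of the surface group — is the step I expect to be the main obstacle, since it genuinely uses the geometry of the filling (e.g. that a symplectically embedded or Lagrangian $\Sigma_g$ is present, coming from the zero section or from the cap constructions, or a Stein-specific argument bounding the genus of the filling).

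With $\pi_1(W) \cong \pi_1(DT^*\Sigma_g) =: \pi$, the homology $H_*(W)$, and the intersection form all matching those of $X_0 := DT^*\Sigma_g$, and with both $W$ and $X_0$ having boundary $Y_g$, I would build a relative cobordism. Glue $W$ to $-X_0$ along $Y_g$ to form a closed $4$-manifold, or more directly construct a cobordism $Z$ from $W$ to $X_0$ rel $Y_g$ by attaching handles realizing the (trivial) difference of their homological data; the matching of $\pi_1$, $H_*$ and intersection forms means the relative homology $H_*(Z, W \cup (Y_g \times I))$ vanishes, so $Z$ is an $h$-cobordism rel boundary. Finally, to get an \emph{$s$-cobordism} rather than merely an $h$-cobordism I must check that the Whitehead torsion of this cobordism vanishes; since $\pi = \pi_1(Y_g)$ is the fundamental group of a Seifert-fibered $3$-manifold, one should be able to quote (or establish) that the relevant Whitehead group $\mathrm{Wh}(\pi)$ is zero — surface groups and these central $\Z$-extensions are known to have vanishing Whitehead group — or alternatively arrange the handle cancellations explicitly so that the torsion is trivial by construction. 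Since we are in dimension $4$ we do not and cannot invoke the $s$-cobordism theorem to conclude diffeomorphism, so the statement as phrased, ``$s$-cobordant rel boundary,'' is exactly the right target and no further smoothing step is needed.
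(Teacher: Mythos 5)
Your overall skeleton (homology, then $\pi_1$, then an s-cobordism statement) matches the paper's, but two of the three steps rest on ideas that do not work here, so there are genuine gaps. First, the homology computation: the Ozsv\'ath--Szab\'o/Fr{\o}yshov-type and adjunction inequalities you invoke are precisely the tools that constrain fillings with $b_2^+=0$, and your guess $b_2^+(W)=0$, $b_2^-(W)=1$ has the sign backwards --- the zero section of $DT^*\Sigma_g$ has self-intersection $2g-2>0$, so $b_2^+(DT^*\Sigma_g)=1$ and every exact filling turns out to have $b_2^+=1$. This is exactly why the standard machinery fails and why the paper's key input is different: one finds a Lagrangian $\Sigma_g$ inside a K3 surface $E(2)$ with simply connected complement, so that complement is a Calabi--Yau cap; gluing it to any exact filling gives a closed symplectic manifold of Kodaira dimension zero, which by Taubes and Morgan--Szab\'o/Bauer/Li must be an integral homology K3, and Mayer--Vietoris then pins down $H_*(W)$. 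Nothing in your proposal substitutes for this cap.

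Second, the fundamental group step is both misstated and underpowered. You want $\pi_1(Y_g)\to\pi_1(W)$ to be an isomorphism and say the filling ``does not collapse the fiber,'' but $\pi_1(DT^*\Sigma_g)=\pi_1(\Sigma_g)=\pi_1(Y_g)/\langle t\rangle$: the correct target is that the fiber class $\tau$ dies and \emph{nothing else} does. Hopficity of $\pi_1(\Sigma_g)$ does not give this, because a priori $\pi_1(W)/\langle\tau\rangle$ could be a proper quotient of $\pi_1(\Sigma_g)$ with the same abelianization $\Z^{2g}$ (e.g.\ $\Z^{2g}$ itself), and $\tau$ could have nontrivial finite order. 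The paper's argument is genuinely harder: it uses that surface groups are RFRS (Agol) to build a tower of finite cyclic covers, shows each cover of $W$ is again a Stein filling of a unit cotangent bundle (so the surjectivity and Euler-characteristic constraints propagate up the tower) to conclude $\pi_1(W)$ is a central extension of $\pi_1(\Sigma_g)$ by $\langle\tau\rangle$, and then kills $\tau$ via $H_2(\pi_1(W))\cong\Z$ (a Stallings/Hopfian argument) and the Lyndon--Hochschild--Serre spectral sequence. Finally, your last step is also too optimistic: matching $\pi_1$, homology and intersection forms does not by itself produce an h-cobordism rel boundary. One first needs a homotopy equivalence $DT^*\Sigma_g\to W$ rel boundary, which requires showing $W$ is aspherical (the paper uses Hillman's deficiency/$L^2$-Betti number criterion) in order to extend the map over the $3$- and $4$-handles, and then one quotes Khan's topological s-rigidity for aspherical $4$-manifolds with surface fundamental group to convert the homotopy equivalence into an s-cobordism.
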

In particular, $W$ is homotopy equivalent rel boundary to $DT^*\Sigma_g$.

\begin{theorem} \label{thm:main-exact}
If $(W,\omega)$ is an exact symplectic filling of $(Y_g,\xi_g) = (ST^*\Sigma_g, \xi_{\can})$, then the homology of $W$ is given by 
\begin{align*}
H_1(W;\Z) &\cong \Z^{2g} \oplus \Z/d\Z, & H_2(W;\Z) &\cong \Z, & H_3(W;\Z) &= 0
\end{align*}
for some integer $d$ such that $d^2$ divides $g-1$, and the intersection form on $H_2(W)$ is $\langle \frac{2g-2}{d^2}\rangle$.
\end{theorem}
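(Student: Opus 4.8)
The plan is to extract as much homological information as possible from the exactness of the filling, together with the known topology of $(Y_g,\xi_g)$, and then pin down the remaining ambiguity using the intersection form and Poincaré–Lefschetz duality. First I would recall the standard facts about $Y_g = ST^*\Sigma_g$: it is an $S^1$-bundle over $\Sigma_g$ with Euler number $2g-2 = \chi(\Sigma_g)$ (up to sign), so its homology is $H_1(Y_g;\Z) \cong \Z^{2g} \oplus \Z/(2g-2)\Z$ and $H_2(Y_g;\Z)\cong\Z$, with the $\Z$ in $H_2$ generated by the fiber class whose pairing with the base is captured by the Euler number. The disk cotangent bundle $DT^*\Sigma_g$ is the associated disk bundle, with $H_2(DT^*\Sigma_g;\Z)\cong\Z$ generated by the zero section $[\Sigma_g]$ of self-intersection $2g-2$, and $H_1 \cong \Z^{2g}$, $H_3 = 0$. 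So the target homology is exactly that of $DT^*\Sigma_g$ but with a possible extra torsion summand $\Z/d\Z$ in $H_1$ and the self-intersection scaled by $1/d^2$; this is precisely the shape one expects if $[\Sigma_g]$ becomes $d$ times a primitive class.

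\textbf{Key steps.}

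\emph{Step 1: $b_1$ and the second Betti number.} Since $\omega$ is exact, there is no symplectic-form obstruction, but the main input is that exactness forces $H^1$ of the filling to inject into $H^1$ of the boundary (a form of the fact that a Liouville filling cannot kill $H_1$ coming from the boundary beyond what the Euler class forces). Concretely I would argue, via the long exact sequence of the pair $(W,Y_g)$ and Poincaré–Lefschetz duality $H_k(W,Y_g)\cong H^{4-k}(W)$, that $b_1(W) \le b_1(Y_g) = 2g$; the reverse inequality $b_1(W)\ge 2g$ should follow because the map $\pi_1(Y_g)\to\pi_1(W)$ together with the structure of $Y_g$ as a circle bundle over $\Sigma_g$ forces $H_1(\Sigma_g)$ to survive (this is where one uses that $Y_g$ is aspherical and $\pi_1(Y_g)$ is a central extension of $\pi_1(\Sigma_g)$). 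Hence $b_1(W)=2g$. For $b_2$, I would use that $\chi(W)$ is constrained: $W$ has the homotopy type of a $2$-complex built from $Y_g$ by attaching $2$- and $1$-handles (or dually), and combine Euler characteristic bookkeeping with the known $\chi(DT^*\Sigma_g)=\chi(\Sigma_g)=2-2g$ — but more robustly, I'd get $b_3(W)=0$ from Poincaré–Lefschetz duality plus the fact that a Stein/exact filling deformation retracts onto a complex of dimension $\le 2$ (for Stein this is the handle decomposition; for exact one still gets $b_3=0$ from $H_3(W;\Q)\cong H^1(W,Y_g;\Q)$ and a dimension count), and then $b_2(W)=1$ will fall out of the half-lives-half-dies principle applied to the rational intersection form together with $b_2(Y_g)=1$.

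\emph{Step 2: the torsion in $H_1$ and the intersection form.} Once $H_1(W;\Z)\cong\Z^{2g}\oplus T$ with $T$ finite and $H_2(W;\Z)\cong\Z$, $H_3(W;\Z)=0$, I would analyze the long exact sequence of $(W,Y_g)$ with $\Z$ coefficients together with the duality isomorphisms $H_2(W,Y_g)\cong H^2(W)$, $H_1(W,Y_g)\cong H^3(W)$, etc., to track how $[\Sigma_g]\in H_2(Y_g)$ maps into $H_2(W)$. Writing the image of a generator of $H_2(Y_g)\cong\Z$ as $d$ times a generator of $H_2(W)\cong\Z$, a diagram chase comparing the boundary map $H_2(Y_g)\to H_1(\text{nothing})$... rather, comparing the presentation of $H_1(Y_g)$ (with its $\Z/(2g-2)$) against that of $H_1(W)$, forces $T\cong\Z/d\Z$ and $d^2 \mid (2g-2)$, i.e. $d^2 \mid 2(g-1)$; a parity refinement (using that the intersection form $\langle (2g-2)/d^2\rangle$ must be an integer and that $Y_g$ bounds $DT^*\Sigma_g$ with even form) should upgrade this to $d^2\mid g-1$. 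The intersection form on $H_2(W)\cong\Z$ is then determined: the self-pairing of the generator is $(2g-2)/d^2$, because the generator pulls back to $(1/d)[\Sigma_g]$ under the natural maps and $[\Sigma_g]^2 = 2g-2$ is inherited from the neighborhood of the zero section that sits inside any filling via the contact structure's standard model near $Y_g$.

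\textbf{Main obstacle.}

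The step I expect to fight hardest is establishing $b_1(W)=2g$ and $b_2(W)=1$ rigorously — in particular the \emph{upper} bound $b_2(W)\le 1$ and the fact that $b_2^+(W)=0$ so that the intersection form is negative (or the lone generator has the right sign). For Stein fillings one has Theorem~\ref{thm:main-stein} essentially giving everything, but the exact case is genuinely weaker, so I would need an argument that does not use a handle decomposition: presumably one invokes that an exact filling of a contact $3$-manifold with $b_2^+>0$ would contradict the adjunction-type constraints coming from the known Stein filling (or from Seiberg–Witten/Heegaard Floer, via the earlier sections of the paper), pinning $b_2^+(W)=0$. Combined with Step 1's $b_2(W)\ge 1$ (half-lives-half-dies from $b_2(Y_g)=1$) this gives $b_2(W)=1$. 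The rest — torsion analysis and the divisibility $d^2\mid g-1$ — is then a careful but routine exercise in the long exact sequence and universal coefficients, so I would keep it brief and push the subtlety into the Betti-number computation.
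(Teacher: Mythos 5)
There is a genuine gap: your proposal is missing the paper's central idea, the Calabi--Yau cap, and without it none of the Betti-number bounds you identify as the ``main obstacle'' can be established. The paper constructs a Lagrangian $\Sigma_g$ in the K3 surface $E(2)$ with simply connected complement $X_g$, glues $X_g$ to the exact filling $W$, and uses Taubes' Seiberg--Witten results together with the classification of symplectic $4$-manifolds of Kodaira dimension zero (Morgan--Szab\'o, Bauer, Li) to show the closed manifold $Z = W \cup_{Y_g} X_g$ is an integral homology K3; exactness of $\omega$ enters only to force $K_Z\cdot[\omega_Z]=0$. Every numerical conclusion -- $b_1(W)=2g$, $b_2(W)=1$, $b_3(W)=0$, the torsion $\Z/d\Z$, and the divisibility $d^2\mid g-1$ (which comes from a lattice determinant count inside the unimodular \emph{even} form of $Z$) -- is then extracted from Novikov additivity and Mayer--Vietoris for this decomposition. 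Your proposed substitutes do not work: ``exactness forces $H^1(W)$ to inject into $H^1(Y_g)$'' is not a theorem, and some global input of this kind is unavoidable since McDuff's examples give strong (non-exact) fillings of $(Y_g,\xi_g)$ with arbitrarily large $b_2^+$; likewise, exact (Liouville) fillings are not known to retract onto $2$-complexes, so your route to $b_3(W)=0$ is only valid in the Stein case.

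Worse, one of your key intermediate claims is false. You propose to pin down $b_2^+(W)=0$ so that the intersection form is negative definite, via ``adjunction-type constraints.'' In fact $b_2^+(W)=1$ and the form is the \emph{positive} definite $\langle \tfrac{2g-2}{d^2}\rangle$; the model filling $DT^*\Sigma_g$ already has $b_2^+=1$ with form $\langle 2g-2\rangle$. The paper explicitly highlights that $b_2^+(W)>0$ is what makes this classification problem inaccessible to the standard techniques (symplectic spheres of nonnegative square \`a la McDuff, or planar open books \`a la Wendl--Etnyre), so an argument forcing negative definiteness cannot exist. Finally, your parity upgrade from $d^2\mid 2(g-1)$ to $d^2\mid g-1$ is unsupported: evenness of the form on $DT^*\Sigma_g$ says nothing about an arbitrary filling $W$; in the paper the required evenness is that of $H_2(Z)$ for the homology K3 $Z$, which again requires the cap.
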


\begin{remark}
The requirement that $d^2 \mid g-1$ implies that the integral homology and intersection form of an exact filling of $(Y_g,\xi_g)$ are uniquely determined (hence isomorphic to those of $DT^*\Sigma_g$) whenever $g-1$ is square-free.  This condition is well-known to hold for a subset of the natural numbers with density $\frac{6}{\pi^2}$.
\end{remark}

\begin{remark}
Li, Mak, and Yasui have independently proved a stronger version of Theorem~\ref{thm:main-exact}, namely that every exact filling of $(Y_g,\xi_g)$ has the integral homology and intersection form of $DT^*\Sigma_g$, by similar arguments.  In other words, every exact filling has $d=1$.
\end{remark}

One notable feature of Theorems~\ref{thm:main-stein} and \ref{thm:main-exact} is that all of the fillings involved have $b_2^+(W)$ positive.  As far as we are aware, any classification theorems which have been proved to date for symplectic or Stein fillings of fillable contact 3-manifolds $(Y,\xi)$ have the feature that all of the symplectic fillings have $b_2^+=0$.  This is true because the classifications usually follow from one of two starting points: either $(Y,\xi)$ has a symplectic cap containing a symplectic sphere of nonnegative self-intersection, or $(Y,\xi)$ is supported by a planar open book.

In the first of these cases, it follows from McDuff \cite{mcduff-rational-ruled} that any filling embeds into a blow-up of either $\mathbb{CP}^2$ or a ruled surface, and in either case the closed manifold has $b_2^+=1$, with $H_2^+$ generated by the symplectic sphere inside the cap.  In the second case, the classifications use work of Wendl \cite{wendl}, who showed that all Stein fillings admit Lefschetz fibrations corresponding to factorizations of the monodromy into positive Dehn twists; but the planarity implies by a result of Etnyre \cite{etnyre} (whose proof relies on \cite{mcduff-rational-ruled}) that the filling is negative definite.  These techniques have been applied successfully to many contact structures on lens spaces \cite{mcduff-rational-ruled, lisca-lens, pvhm, kaloti}, links of simple singularities \cite{ohta-ono}, and Seifert fibered spaces \cite{starkston}, among others.

The reason we are able to succeed in the absence of either technique is the use of a Calabi-Yau cap, as defined and studied by Li, Mak, and Yasui \cite{lmy}.  We find a Lagrangian $\Sigma_g$ inside a K3 surface with simply connected complement, and a Weinstein tubular neighborhood of this Lagrangian is symplectomorphic to the disk cotangent bundle of $\Sigma_g$, so its complement is a symplectic cap for $(Y_g,\xi_g)$.  Gluing this cap to any filling produces a closed 4-manifold $X$ of symplectic Kodaira dimension zero, and the classification of the latter \cite{morgan-szabo,bauer,li-quaternionic} tells us that $X$ must be an integral homology K3.  In Section~\ref{sec:exact} we then deduce Theorem~\ref{thm:main-exact} from careful application of the Mayer-Vietoris sequence, and following this we use properties of Stein fillings in Section~\ref{sec:stein} to pin down the fundamental group and prove Theorem~\ref{thm:main-stein}.

Finally, we remark that we would like to strengthen Theorem~\ref{thm:main-stein} by showing that any Stein filling $W$ of $(Y_g,\xi_g)$ is homeomorphic to $DT^*\Sigma_g$, but for now this may be out of reach using our techniques.  This would require a proof of the topological s-cobordism theorem for s-cobordisms between 4-manifolds with fundamental group $\pi_1(\Sigma_g)$, which is currently only known when the fundamental group is ``good'' (see Freedman-Quinn \cite{freedman-quinn}), and it is an open question whether surface groups are good.

\subsection*{Acknowledgments}
This work began at the Princeton Low-Dimensional Topology Workshop 2015, and we thank the participants for contributing to a productive environment.  We thank Matt Day, John Etnyre, Dave Futer and Yo'av Rieck for helpful conversations.  We are especially grateful to Ian Agol for pointing out to us that Proposition~\ref{prop:central-extension} should be true and explaining how it should follow from the RFRS condition. SS was supported by NSF grant DMS-1506157. JVHM was supported in part by Simons Foundation grant No. 279342.

\section{Calabi-Yau caps}

In this section, we will construct and study a certain type of concave filling which was originally used by Li, Mak, and Yasui \cite{lmy} to bound the topology of Stein fillings of a given manifold.
\begin{definition}
Let $(Y,\xi)$ be a contact manifold.  A \emph{Calabi-Yau cap} for $(Y,\xi)$ is a symplectic manifold $(W,\omega)$ with concave boundary $(Y,\xi)$ and torsion first Chern class, such that there is a contact form $\alpha$ for $\xi$ and a Liouville vector field $X$ near $Y=\partial W$ satisfying $\alpha = \iota_X \omega|_Y$.
\end{definition}

In this section we will show that $(Y_g,\xi_g)$ admits a simply connected Calabi-Yau cap by finding an embedded Lagrangian $\Sigma_g$ of genus $g$ inside the elliptic surface $E(2)$, which is a K3 surface.  The cap $X_g$ is then the complement of a Weinstein tubular neighborhood of $\Sigma_g$, and it is Calabi-Yau because a K3 surface has trivial canonical class.  We remark that Li, Mak, and Yasui construct a Calabi-Yau cap for $(Y_g,\xi_g)$ by finding a Lagrangian $\Sigma_g$ inside the standard symplectic $T^4$, but this larger cap enables us to place much stronger restrictions on the possible fillings.

\begin{theorem} \label{thm:K3-lagrangian}
The elliptic surface $E(2)$ contains a Lagrangian surface $\Sigma_g$ of genus $g$ such that the complement $X_g$ of a Weinstein tubular neighborhood of $\Sigma_g$ is simply connected.
\end{theorem}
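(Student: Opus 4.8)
The plan is to realize $\Sigma_g$ as a smooth complex curve in a carefully chosen K3 surface, to make it Lagrangian by a hyperk\"ahler rotation, and to compute the fundamental group of the complement with van Kampen's theorem using an auxiliary rational curve that meets the curve transversally in a single point; this auxiliary curve is what I would build into the construction from the outset.

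I would first choose the K3 surface by prescribing its Picard lattice. Fix a primitive class $\beta$ with $\beta^2 = 2g-2$ in the K3 lattice together with a class $\kappa$ with $\kappa^2 = -2$ and $\kappa\cdot\beta = 1$. The rank-two lattice $\langle\beta,\kappa\rangle$ has signature $(1,1)$, so it admits a primitive embedding into the K3 lattice, and by surjectivity of the period map together with the Torelli theorem there is a K3 surface $X$ with $\operatorname{Pic}(X) = \langle\beta,\kappa\rangle$; since no $(-2)$-class is orthogonal to $\beta$, one checks that $\beta$ lies in an ample chamber, so we may take $\beta$ ample. By Saint-Donat's analysis of linear systems on K3 surfaces (or Bertini, once $|\beta|$ is known to be base-point free) the general member $C$ of $|\beta|$ is a smooth irreducible curve, of genus $1 + \tfrac12\beta^2 = g$. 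On the other hand Riemann--Roch forces one of $\pm\kappa$ to be effective and, as $(-\kappa)\cdot\beta < 0$, it is $\kappa$; because $\beta$ is ample, $\kappa\cdot\beta = 1$ then forces the effective representative of $\kappa$ to be a single irreducible $(-2)$-curve $Z$, i.e.\ a smooth rational curve, which meets $C$ transversally in exactly one point.

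Given this, $\pi_1$ of the complement is easy. Writing $X = (X\setminus\nu C)\cup_{\partial\nu C}\nu C$ for a tubular neighborhood $\nu C$, van Kampen's theorem together with $\pi_1(X) = 1$ and the surjection $\pi_1(\partial\nu C)\twoheadrightarrow\pi_1(C)$, whose kernel is generated by the meridian $\mu$ of $C$, shows that $\pi_1(X\setminus C)$ is the normal closure of $\mu$; but $Z\setminus(Z\cap C)$ is a disk, and pushing it slightly off $C$ produces a disk in $X\setminus C$ with boundary $\mu$, so $\mu = 1$ and $\pi_1(X\setminus C) = 1$. Now equip $X$ with a hyperk\"ahler triple $(\omega_1,\omega_2,\omega_3)$ whose first complex structure and K\"ahler class are the given ones; since $C$ is holomorphic, the holomorphic two-form $\omega_2 + i\omega_3$ restricts to zero on $C$, so $\omega_2|_C = 0$ and $C$ becomes a Lagrangian surface $\Sigma_g$ in the K3 surface $(X,\omega_2)$, with the underlying smooth pair unchanged so that still $\pi_1(X\setminus\Sigma_g) = 1$. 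A Weinstein tubular neighborhood of $\Sigma_g$ is symplectomorphic to a disk cotangent bundle $DT^*\Sigma_g$ with boundary $(ST^*\Sigma_g,\xi_{\can}) = (Y_g,\xi_g)$, and its complement $X_g$, being homotopy equivalent to $X\setminus\Sigma_g$, is simply connected. Since all K3 surfaces are diffeomorphic --- and indeed, after enlarging the Picard lattice by a fiber class, one may take $X$ to be elliptic --- this is the claimed Lagrangian inside $E(2)$.

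The main point that needs care is ensuring simple connectivity of the complement \emph{uniformly in $g$}. A complex curve in a very general (Picard rank one) K3 would have a complement whose fundamental group is perfect but potentially nontrivial, and no holomorphic curve would meet it exactly once; the sole purpose of enlarging $\operatorname{Pic}(X)$ to contain the $(-2)$-class $\kappa$ with $\kappa\cdot\beta = 1$ is to manufacture the disk bounding the meridian. The remaining ingredients --- the primitive lattice embedding, the smoothness of the general member of $|\beta|$ (including the low-genus cases, e.g.\ $g = 2$ where $|\beta|$ realizes $X$ as a double plane), the irreducibility of the effective representative of $\kappa$, and the identification of the Weinstein boundary with the canonical contact structure --- are all routine.
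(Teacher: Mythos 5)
Your argument is correct, but it takes a genuinely different route from the paper. The paper works entirely within symplectic topology: it writes $E(2)$ as a fiber sum $E(1)\#_{T^2}E(1)$, takes $g$ disjoint Lagrangian tori in the product neck together with a Lagrangian sphere over a matching path meeting each torus once, and performs Lagrangian surgery to produce $\Sigma_g$; you instead prescribe a rank-two Picard lattice $\langle\beta,\kappa\rangle$, realize $\Sigma_g$ as a smooth member of $|\beta|$, and make it Lagrangian by hyperk\"ahler rotation. Both proofs kill the meridian the same way in the end --- a sphere ($S'$ over a second matching path in the paper, the $(-2)$-curve $Z$ in your version) meeting $\Sigma_g$ transversally in one point supplies a disk in the complement bounding $\mu$. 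Your lattice computations check out: no $(-2)$-class in $\langle\beta,\kappa\rangle$ is orthogonal to $\beta$ (this would force $a^2(g-1)(4g-3)=1$), so $\beta$ can be taken ample after acting by the Weyl group, and ampleness plus $\kappa\cdot\beta=1$ does force $\kappa$ to be a single irreducible $(-2)$-curve meeting $C$ transversally once. The trade-off is machinery versus explicitness: the paper's construction is elementary and hands you the Lagrangian in the standard fiber-sum symplectic form on $E(2)$, while yours invokes Nikulin's embedding theorem, surjectivity of the period map and Torelli, Saint-Donat's base-point-freeness analysis, and Yau's theorem (for the hyperk\"ahler triple). One caveat worth stating explicitly: your symplectic form is $\omega_2$ on a hyperk\"ahler-rotated K3, not the standard form on the elliptic $E(2)$, so the Lagrangian lives in a manifold merely \emph{diffeomorphic} to $E(2)$. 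This is harmless for everything downstream --- Propositions~\ref{prop:cap-betti} and \ref{prop:cap-homology} and the Calabi--Yau cap property use only the diffeomorphism type of the K3 and the vanishing of its canonical class --- but it means your construction proves a slight rephrasing of the statement rather than its literal wording.
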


\begin{proof}
We express the elliptic fibration $\pi: E(2) \to S^2$ as a fiber sum $E(1) \#_{T^2} E(1)$, where if $a$ and $b$ are a pair of curves in the torus $T^2$ which intersect exactly once then each fibration $E(1) \to S^2$ has six singular fibers with vanishing cycle $a$ and six with vanishing cycle $b$, corresponding to the relation $(ab)^6=1$ in the mapping class group of the torus.  We can think of the base of the fibration $\pi$ as a connected sum $S^2 = S^2 \# S^2$, with one copy of $E(1)$ over each summand.

Let $\gamma \subset S^2$ be a simple closed curve separating the two $S^2$ summands; then we can arrange for $\gamma$ to have a small collar neighborhood $A = (-\epsilon,\epsilon)\times \gamma \subset S^2$, with no critical values of $\pi$, so that the symplectic form on $\pi^{-1}(A) \cong A \times T^2$ is the product symplectic form induced by area forms on each factor.  In particular, if we pick distinct values $t_1,\dots,t_g \in (-\epsilon,\epsilon)$ then the $g$ disjoint tori $T_i = \{t_i\} \times \gamma \times a$ are all Lagrangian.

Now let $c \subset S^2$ be a matching path \cite{seidel} between two critical points, one in either $S^2$ summand of $S^2 = S^2 \# S^2$, which each have vanishing cycle $b$.  Then $c$ lifts to a Lagrangian sphere $S \subset E(2)$.  We can arrange for $c$ to intersect $\gamma$ transversely in a single point, and if each $t_i$ is sufficiently close to zero it follows that $S$ intersects each $T_i$ transversely in a single point as well, namely the point $a\cap b$ in the fiber above $c \cap (\{t_i\}\times\gamma)$.  We surger $S$ and $T_i$ together at each of these points \cite{lalonde-sikorav,polterovich} to produce a Lagrangian $\Sigma_g$ of genus $g$.  We now take $X_g = E(2) \smallsetminus N(\Sigma_g)$, where $N(\Sigma_g)$ is a small Weinstein neighborhood of $\Sigma_g$.

It remains to be seen that $X_g$ is simply connected.  Since $E(2)$ is simply connected, $\pi_1(X_g)$ is normally generated by the class of a meridian $\mu$ of the Lagrangian $\Sigma_g$.  We let $c' \subset S^2$ be a path in one of the two $S^2$ summands with endpoints at a pair of critical values which both have vanishing cycle $a$, such that $c'$ intersects $c$ once transversely and is disjoint from each of the $\{t_i\}\times \gamma$.  Then there is a sphere $S' \subset E(2)$ lying above $c'$ (which need not be Lagrangian) such that $S' \cap S$ is the single point $a \cap b$ in the fiber over $c' \cap c$, hence $S'$ intersects $\Sigma_g$ transversely in precisely this point.  We arrange for $S'$ to intersect $\overline{N(\Sigma_g)}$ in a single meridional disk $D$ about this point, and then $S' \cap X_g$ is a disk $\overline{S'\smallsetminus D}$ with boundary a meridian of $\Sigma_g$, so $[\mu]=0$ and we are done.
\end{proof}

\begin{proposition} \label{prop:cap-betti}
The cap $X_g$ has Betti numbers $b_2^+(X_g) = 2$ and $b_2^-(X_g)=19$.
\end{proposition}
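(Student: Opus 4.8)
The plan is to compute the Euler characteristic, the signature, and the rank of the radical of the intersection form of $X_g$, and then to solve for $b_2^+$ and $b_2^-$. Write $N = N(\Sigma_g)$ for the Weinstein tubular neighborhood of Theorem~\ref{thm:K3-lagrangian}, so that $E(2) = X_g \cup_{Y_g} N$. Since $N$ is symplectomorphic to $DT^*\Sigma_g$ it deformation retracts onto $\Sigma_g$; in particular $\chi(N) = 2-2g$, the group $H_2(N;\Z) \cong \Z$ is generated by the zero section, and, because the normal bundle of a Lagrangian surface is isomorphic to its cotangent bundle, that generator has self-intersection equal to the Euler number $-\chi(\Sigma_g) = 2g-2$. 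Hence the intersection form of $N$ is $\langle 2g-2\rangle$, which is positive definite for $g \geq 2$, so $\sigma(N) = 1$.

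Now $E(2)$ is a K3 surface, so it is simply connected with $\chi(E(2)) = 24$ and $\sigma(E(2)) = -16$. As $Y_g$ is a closed odd-dimensional manifold, $\chi(Y_g) = 0$, so additivity of the Euler characteristic gives $\chi(X_g) = \chi(E(2)) - \chi(N) = 24 - (2-2g) = 2g + 22$, and Novikov additivity of the signature gives $\sigma(X_g) = \sigma(E(2)) - \sigma(N) = -17$. By Theorem~\ref{thm:K3-lagrangian} the cap $X_g$ is simply connected, so $b_0(X_g) = 1$, $b_1(X_g) = 0$, and (since $\partial X_g = Y_g \neq \emptyset$) $b_4(X_g) = 0$. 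For $b_3$, Lefschetz duality gives $H_3(X_g;\R) \cong H^1(X_g, \partial X_g;\R)$, and the long exact sequence of the pair $(X_g, \partial X_g)$ shows this vanishes because $H^0(X_g;\R) \to H^0(\partial X_g;\R)$ is an isomorphism (both spaces being connected) and $H^1(X_g;\R) = 0$. Thus $b_3(X_g) = 0$ and $b_2(X_g) = \chi(X_g) - 1 = 2g + 21$.

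It remains to split $b_2(X_g) = 2g+21$ into $b_2^+$ and $b_2^-$, which requires pinning down the degeneracy of the intersection form. Its radical is the image of the inclusion-induced map $H_2(\partial X_g;\R) = H_2(Y_g;\R) \to H_2(X_g;\R)$. Again by Lefschetz duality, $H_3(X_g, \partial X_g;\R) \cong H^1(X_g;\R) = 0$, so the long exact sequence of $(X_g, \partial X_g)$ shows that $H_2(Y_g;\R) \to H_2(X_g;\R)$ is injective; hence the radical has dimension $\dim H_2(Y_g;\R) = 2g$, where the last equality follows from the Gysin sequence for the circle bundle $ST^*\Sigma_g \to \Sigma_g$ (or from Poincar\'e duality for $Y_g$, using $b_1(Y_g) = 2g$). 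Therefore $b_2^+(X_g) + b_2^-(X_g) = (2g+21) - 2g = 21$, and combining this with $b_2^+(X_g) - b_2^-(X_g) = \sigma(X_g) = -17$ gives $b_2^+(X_g) = 2$ and $b_2^-(X_g) = 19$.

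I expect the only real subtlety to be the last step: the intersection form of $X_g$ is highly degenerate, with a $2g$-dimensional radical inside the $(2g+21)$-dimensional vector space $H_2(X_g;\R)$, so it is essential to identify that radical precisely rather than merely estimate it; by contrast, once one knows the self-intersection $2g-2$ of the Lagrangian $\Sigma_g$, the computations of $\chi(X_g)$ and $\sigma(X_g)$ are routine bookkeeping.
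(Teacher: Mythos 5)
Your proof is correct, and it reaches the conclusion by a genuinely different route from the paper for the key step. Both arguments compute $\sigma(X_g)=-17$ identically, via the self-intersection $2g-2$ of the Lagrangian and Novikov additivity. Where they diverge is in producing the second equation needed to solve for $b_2^{\pm}$. The paper never computes $b_2(X_g)$ in this proposition; instead it pins down $b_2^+(X_g)=2$ directly by a geometric sandwich inside $H_2(K3;\Q)$: a positive definite $3$-plane containing $[\Sigma_g]$ has a rational complement of $[\Sigma_g]$ represented by surfaces pushed off the neighborhood $D_g$, giving $b_2^+(X_g)\geq 2$, while $b_2^+(X_g)\geq 3$ would force $b_2^+(K3)\geq 4$. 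You instead compute $b_2(X_g)=2g+21$ from the Euler characteristic, identify the radical of the (degenerate) intersection form as the image of $H_2(Y_g;\R)$, and use Lefschetz duality plus $H^1(X_g;\R)=0$ to show that image has dimension exactly $2g$, yielding $b_2^++b_2^-=21$. Your route is more systematic and purely homological, and as a bonus it recovers facts ($b_2(X_g)=2g+21$ and the rank-$2g$ radical) that the paper only establishes later in Proposition~\ref{prop:cap-homology} via Mayer--Vietoris; the paper's route is shorter here because it sidesteps $b_2(X_g)$ entirely, at the cost of a slightly delicate geometric claim about representing classes orthogonal to $[\Sigma_g]$ by surfaces avoiding $D_g$. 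You are right that the essential subtlety is the degeneracy of the form; your identification of the radical handles it cleanly.
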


\begin{proof}
Recall that $b_2^+(K3) = 3$ and $b_2^-(K3) = 19$.  If $D_g$ is the Weinstein neighborhood of the surface $\Sigma_g$, so that $D_g$ is symplectomorphic to the disk cotangent bundle of $\Sigma_g$, then $H_2(D_g) = H_2(\Sigma_g) = \Z$, and since $\Sigma_g$ has self-intersection $2g-2 > 0$ the signature of $D_g$ is 1.  By Novikov additivity we have $\sigma(D_g) + \sigma(X_g) = \sigma(K3) = -16$, so $X_g$ has signature $-17$.  It thus suffices to show that $b_2^+(X_g) = 2$.

Let $V_+ \subset H_2(K3;\Q)$ be a positive definite 3-dimensional subspace containing the class $[\Sigma_g]$.   We can extend $[\Sigma_g]$ to a rational basis of $V_+$ whose other two classes are orthogonal to $[\Sigma_g]$, and thus integral multiples of those two classes can be represented by surfaces which are disjoint from $\Sigma_g$ and even avoid the neighborhood $D_g$.  These surfaces span a positive definite subspace of $H_2(X_g;\Q)$, so that $b_2^+(X_g) \geq 2$.  However, if $b_2^+(X_g) \geq 3$ then adjoining $[\Sigma_g]$ to a basis of a 3-dimensional positive-definite subspace of $H_2(X_g;\Q)$ would yield $b_2^+(K3) \geq 4$, which is absurd.
\end{proof}

We can understand the homology of $X_g$ more precisely by considering the Mayer-Vietoris sequence associated to the decomposition $K3 = D_g \cup_{Y_g} X_g$.

\begin{proposition} \label{prop:cap-homology}
We have $H_1(X_g;\Z) = H_3(X_g;\Z) = 0$ and $H_2(X_g;\Z) \cong \Z^{21} \oplus \Z^{2g}$, where the intersection form on $X_g$ has block form $\left(\begin{smallmatrix}Q_{g}&0\\0&0\end{smallmatrix}\right)$ with respect to this decomposition for some nondegenerate form $Q_{g}$ on $\Z^{21}$.
\end{proposition}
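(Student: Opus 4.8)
The plan is to run the Mayer--Vietoris sequence for the decomposition $K3 = D_g \cup_{Y_g} X_g$ (thickening $Y_g$ to a collar so that the pieces are open), feeding in the following inputs. The $K3$ surface is simply connected with $H_2(K3;\Z)\cong\Z^{22}$, and $D_g$ deformation retracts onto the zero section $\Sigma_g$, so $H_\ast(D_g;\Z)$ is $\Z,\ \Z^{2g},\ \Z,\ 0,\ 0$ with $H_2(D_g)$ generated by $[\Sigma_g]$. The boundary $Y_g=ST^\ast\Sigma_g$ is a circle bundle over $\Sigma_g$ of Euler number $\pm(2g-2)\neq 0$, so the Gysin sequence gives $H_\ast(Y_g;\Z)\cong \Z,\ \Z^{2g}\oplus\Z/(2g-2),\ \Z^{2g},\ \Z$, where $H_2(Y_g)$ is generated by the tori $T_\alpha=\pi^{-1}(\alpha)$ over embedded curves $\alpha\subset\Sigma_g$ and the torsion in $H_1(Y_g)$ is the fiber class. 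Finally, $H_1(X_g;\Z)=0$ by Theorem~\ref{thm:K3-lagrangian}, and $b_2(X_g)=21$ rationally by Proposition~\ref{prop:cap-betti}.

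First I would work down from the top of the sequence. The connecting map $H_4(K3)\to H_3(Y_g)$ carries $[K3]$ to $\pm[Y_g]$, hence is an isomorphism $\Z\xrightarrow{\sim}\Z$; exactness together with $H_3(K3)=0$ then forces $H_3(X_g)=0$. Next I would pin down the inclusion-induced maps out of $H_2(Y_g)$ and $H_1(Y_g)$. Each generator $T_\alpha$ of $H_2(Y_g)$ bounds the solid torus $\pi^{-1}(\alpha)\subset D_g$, so $H_2(Y_g)\to H_2(D_g)$ is zero; since $H_3(K3)=0$ makes $H_2(Y_g)\to H_2(D_g)\oplus H_2(X_g)$ injective, we get an injection $j_\ast\colon H_2(Y_g)\hookrightarrow H_2(X_g)$. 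Likewise, the composite of $Y_g\hookrightarrow D_g$ with the retraction $D_g\simeq\Sigma_g$ is the bundle projection, which is surjective on $H_1$, so $H_1(Y_g)\to H_1(D_g)$ is onto; as $H_1(X_g)=0$, a splitting argument shows its kernel is exactly the torsion subgroup $\Z/(2g-2)$. By exactness this kernel is the image of the connecting map $\partial\colon H_2(K3)\to H_1(Y_g)$, so $\ker\partial$ is a subgroup of $H_2(K3)\cong\Z^{22}$ of index $2g-2$, hence free abelian of rank $22$.

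Now exactness at $H_2(D_g)\oplus H_2(X_g)=\Z\oplus H_2(X_g)$ and at $H_2(K3)$ reads
\[
0 \to \Z^{2g} \to \Z\oplus H_2(X_g) \xrightarrow{\ k_\ast+l_\ast\ } \ker\partial \to 0,
\]
where the $\Z^{2g}$ is $0\oplus j_\ast(H_2(Y_g))$. Since $\ker\partial\cong\Z^{22}$ is free the sequence splits, giving $\Z\oplus H_2(X_g)\cong\Z^{2g}\oplus\Z^{22}$, hence $H_2(X_g)\cong\Z^{2g}\oplus\Z^{21}$; together with $H_1(X_g)=H_3(X_g)=0$ this is the homology statement (one can cross-check $\chi(X_g)=22+2g$ and $\sigma(X_g)=-17$ against Proposition~\ref{prop:cap-betti}). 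For the intersection form, observe that the classes of $j_\ast(H_2(Y_g))$ are represented by tori pushed into a collar $Y_g\times(0,1)$ of $\partial X_g$, so they can be made disjoint from one another and from any class carried by the interior; thus this $\Z^{2g}$ lies in the radical of the intersection form, which therefore descends to a form $Q_g$ on $H_2(X_g)/j_\ast(H_2(Y_g))\cong\Z^{21}$. To see $Q_g$ is nondegenerate, apply Poincar\'e--Lefschetz duality: over $\Q$ the radical of the intersection form on $H_2(X_g;\Q)$ equals the image of $H_2(\partial X_g;\Q)=H_2(Y_g;\Q)$, namely $\Z^{2g}\otimes\Q$, so $Q_g$ has trivial radical over $\Q$ and hence nonzero determinant. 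Choosing a splitting $H_2(X_g)\cong\Z^{21}\oplus\Z^{2g}$ then exhibits the block form $\left(\begin{smallmatrix}Q_g & 0\\ 0 & 0\end{smallmatrix}\right)$.

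The main obstacle is the careful bookkeeping of the Mayer--Vietoris connecting maps: specifically, verifying that $H_2(Y_g)\to H_2(D_g)$ vanishes and that the image of $H_2(K3)\to H_1(Y_g)$ is precisely the order-$(2g-2)$ torsion of $H_1(Y_g)$, since it is this that forces the quotient $H_2(X_g)/j_\ast(H_2(Y_g))$ to be torsion-free of rank $21$. The supporting technical points are the Gysin-sequence computation of $H_\ast(Y_g)$ together with the identification of its generators, and the fact that $Y_g\hookrightarrow D_g$ is the circle-bundle projection up to homotopy; with these in hand the rest is formal diagram chasing plus the duality statement about the radical.
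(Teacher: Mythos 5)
Your proof is correct and follows essentially the same route as the paper: the Mayer--Vietoris sequence for $K3 = D_g \cup_{Y_g} X_g$, the identification of $\ker(i_1) = \img(\delta)$ with the $\Z/(2g-2)$ torsion of $H_1(Y_g)$, and the splitting off of $j_*(H_2(Y_g))$ as the radical of the intersection form. Your local justifications differ harmlessly in a few places (you get $H_3(X_g)=0$ from the top of the Mayer--Vietoris sequence rather than the pair $(X_g,\partial X_g)$, kill $H_2(Y_g)\to H_2(D_g)$ by bounding solid tori rather than by positive definiteness of $D_g$, and get nondegeneracy of $Q_g$ from Poincar\'e--Lefschetz duality rather than from $b_2^+(X_g)+b_2^-(X_g)=21$), but these are all valid.
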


\begin{proof}
The claim that $H_1(X_g) = 0$ follows immediately from $X_g$ being simply connected, so then $H^1(X_g)=0$ as well by the universal coefficient theorem.  For $H_3(X_g)$, we use the general fact that if $X$ is an orientable $n$-manifold with nonempty boundary, then $H_{n-1}(X)$ injects into $H_{n-1}(X,\partial X) \cong H^1(X)$: the long exact sequence of the pair $(X,\partial X)$ says that
\[ 0 \to H_{n}(X,\partial X) \to H_{n-1}(\partial X) \to H_{n-1}(X) \to H_{n-1}(X,\partial X) \]
is exact, and the map $H_n(X,\partial X) \to H_{n-1}(\partial X)$ is an isomorphism $\Z \xrightarrow{\sim} \Z$ since it carries the relative fundamental class $[X,\partial X]$ to $[\partial X]$.  In general, this implies that $H_{n-1}(X)$ is torsion-free since $H^1(X)$ is; in this case, since $H^1(X_g)=0$ we have $H_3(X_g)=0$ as well.

Since $H_1(K3) = H_3(K3) = 0$ and $H_1(X_g)=0$, a portion of the Mayer-Vietoris sequence is given by
\[ 0 \to H_2(Y_g) \xrightarrow{i_2} H_2(D_g) \oplus H_2(X_g) \xrightarrow{j} H_2(K3) \xrightarrow{\delta} H_1(Y_g) \xrightarrow{i_1} H_1(D_g) \to 0. \]
Now from $H_1(Y_g) \cong \Z^{2g} \oplus \Z/(2g-2)$ we compute $H_2(Y_g) \cong H^1(Y_g) \cong \Z^{2g}$, so that
\[ 0 \to \Z^{2g} \xrightarrow{i_2} \Z \oplus H_2(X_g) \xrightarrow{j} \Z^{22} \xrightarrow{\delta} \Z^{2g} \oplus \Z/(2g-2) \xrightarrow{i_1} \Z^{2g} \to 0 \]
is exact.  Any torsion in $H_2(X_g)$ must lie in $\ker(j) = \img(i_2)$, and since $\img(i_2) \cong \Z^{2g}$ is torsion-free it follows that $H_2(X_g)$ is as well, hence $H_2(X_g) = \Z^{b_2(X_g)}$.  The map $i_1: \Z^{2g} \oplus \Z/(2g-2) \to \Z^{2g}$ is surjective and its target $\Z^{2g}$ is free, so $\ker(i_1)$ must be precisely the torsion subgroup $\Z/(2g-2)$ of the domain.  Thus $\img(\delta) = \Z/(2g-2)$, and we deduce from the above sequence that
\[ 0 \to \Z^{2g} \xrightarrow{i_2} \Z \oplus \Z^{b_2(X_g)} \xrightarrow{j} \Z^{22} \to \Z/(2g-2) \to 0 \]
is exact.  It follows that $\img(j)$ has index $2g-2$ and that $b_2(X_g) = 21+2g$; the latter fact implies that $H_2(X_g) = \Z^{21+2g}$.

Next, we note that the image of the natural map $H_2(Y_g) \to H_2(D_g)$ contributes to $b_2^0(D_g)$, since any surface inside $Y_g$ can be displaced inside a collar neighborhood of $\partial D_g$; but $H_2(D_g)$ is torsion-free and positive definite, so the map $H_2(Y_g) \to H_2(D_g)$ must be zero.  Since $i_2$ is injective, it follows that $H_2(Y_g) = \Z^{2g}$ injects into $H_2(X_g) = \Z^{21+2g}$.  Then $\ker(j) \subset H_2(X_g)$ is isomorphic to $\Z^{2g}$, so that the map $H_2(X_g) \to H_2(K3)$ has rank 21.  Since its image is a subgroup of $\Z^{22}$ it is free abelian, hence isomorphic to $\Z^{21}$, and so we have an exact sequence
\[ 0 \to H_2(Y_g) \to H_2(X_g) \to \Z^{21} \to 0 \]
which splits because $\Z^{21}$ is free.  Thus we have a direct sum decomposition
\[ H_2(X_g) \cong \Z^{21} \oplus H_2(Y_g) \]
in which the $H_2(Y_g)$ summand lies in the kernel of the intersection form.  But we have seen that $b_2^+(X_g)+b_2^-(X_g) = 21$, so the intersection form must be nondegenerate on the $\Z^{21}$ summand and the proof is complete.
\end{proof}

\begin{remark} \label{rem:Xg-sublattice}
The kernel of $j: H_2(D_g) \oplus H_2(X_g) \to H_2(K3)$ is the $H_2(Y_g) \cong \Z^{2g}$ summand of $H_2(X_g)$, so it restricts to an injective map
\[ j': H_2(D_g) \oplus \Z^{21} \to H_2(K3) \]
whose domain is the lattice $\Z \oplus \Z^{21}$ with intersection form $\left(\begin{smallmatrix}2g-2&0\\0&Q_g\end{smallmatrix}\right)$ in block form, and $j'$ embeds this lattice as an index-$(2g-2)$ sublattice of $H_2(K3) \cong \Z^{22} \cong 3H \oplus -2E_8$.
\end{remark}

\section{The topology of exact fillings}
\label{sec:exact}

We can now use the Calabi-Yau cap $(X_g,\omega_g)$ provided by Theorem~\ref{thm:K3-lagrangian} to understand the topology of fillings of $(Y_g,\xi_g)$.

\begin{proposition}
Let $(W,\omega)$ be an exact symplectic filling of $(Y_g,\xi_g)$.  Then the closed symplectic manifold
\[ (Z,\omega_Z) = (W,\omega) \cup_{(Y_g,\xi_g)} (X_g,\omega_g) \]
is an integer homology K3, with $H_1(Z;\Z)=H_3(Z;\Z)=0$ and $H_2(Z;\Z) = \Z^{22}$.
\end{proposition}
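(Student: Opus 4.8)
The plan is to glue the exact filling to the Calabi--Yau cap, recognize the resulting closed symplectic $4$-manifold as one of a short list via the classification of symplectic Kodaira dimension zero, and then use the size of our cap to isolate the K3 case. First I would observe that $(Z,\omega_Z)$ is indeed a closed symplectic $4$-manifold: this is the standard gluing construction, with the Calabi--Yau cap condition supplying a Liouville collar near $Y_g$ matching the one coming from the exact filling $(W,\omega)$. The essential structural input, which I would take from Li--Mak--Yasui \cite{lmy}, is that gluing an \emph{exact} filling to a Calabi--Yau cap produces a symplectic $4$-manifold with torsion canonical class; the mechanism is that $[\omega_Z]$ restricts to $0$ on $W$ by exactness while $K_Z$ restricts to a torsion class on the cap $X_g$, which together force $K_Z\cdot[\omega_Z]=0$ and, with a little more work, rule out Kodaira dimensions $1$, $2$, and $-\infty$. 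Once $K_Z$ is known to be torsion, $Z$ can contain no symplectic $(-1)$-sphere (such a sphere $E$ would satisfy $K_Z\cdot E=-1$ by the adjunction formula), so $Z$ is a \emph{minimal} symplectic $4$-manifold of symplectic Kodaira dimension zero.

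Next I would apply the classification of minimal symplectic $4$-manifolds of Kodaira dimension zero, due to Morgan--Szab\'o \cite{morgan-szabo}, Bauer \cite{bauer}, and Li \cite{li-quaternionic}: such a manifold has the rational homology of the K3 surface, of the Enriques surface, or of a $T^2$-bundle over $T^2$, and in the K3 case it has the integral homology and intersection form of a K3 surface. It therefore remains only to exclude the Enriques and torus-bundle cases, and this is exactly the point at which the K3-based cap $X_g$ of Theorem~\ref{thm:K3-lagrangian} (rather than a cap built from $T^4$) earns its keep.

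To exclude those cases I would bound $b_2(Z)$ from below. By Propositions~\ref{prop:cap-betti} and \ref{prop:cap-homology}, the intersection form on $H_2(X_g)$ splits as $Q_g\oplus 0$ with $Q_g$ nondegenerate of rank $21$ and signature $2-19=-17$. Every class in $H_2(X_g)$ is represented by a closed surface embedded in the interior of $X_g$, and pushing such surfaces into $Z$ leaves all intersection numbers unchanged, so the images of a basis for the $Q_g$-summand span a rank-$21$ subspace of $H_2(Z;\Q)$ on which the intersection form is again $Q_g$; in particular they are linearly independent, giving $b_2(Z)\ge 21$ (and, reading off signs, $b_2^+(Z)\ge 2$ and $b_2^-(Z)\ge 19$). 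Since the Enriques surface has $b_2=10$ and any $T^2$-bundle over $T^2$ has $b_2\le 6$, whereas K3 has $b_2=22$, only the K3 case of the classification survives. Hence $Z$ has the integral homology (indeed the intersection form) of a K3 surface: $H_1(Z;\Z)=H_3(Z;\Z)=0$ and $H_2(Z;\Z)=\Z^{22}$.

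I expect the first paragraph to be the real obstacle. Verifying that $Z$ has torsion canonical class, and is therefore minimal, is precisely where exactness of $W$ is indispensable---for a general strong filling the conclusion is false, as the examples of McDuff and Wendl discussed in the introduction show---and it rests on both the Calabi--Yau cap formalism of Li--Mak--Yasui and the deep gauge-theoretic classification of Kodaira dimension zero. By contrast, the Betti-number comparison that singles out K3 among the three possibilities is elementary; but it is the reason we need a cap as large as $X_g$, since a cap with $b_2<21$ could not distinguish K3 from a torus bundle.
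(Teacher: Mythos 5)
Your overall route is the same as the paper's: glue the cap, show the canonical class is torsion (hence $Z$ is minimal of symplectic Kodaira dimension zero), invoke the Morgan--Szab\'o/Bauer/Li classification, and use the size of the cap $X_g$ to rule out the Enriques and torus-bundle cases. Those steps are fine, and deferring the torsion-canonical-class statement to Li--Mak--Yasui (the paper instead reproves it via Taubes' theorems) is a legitimate shortcut.

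There is, however, a genuine gap at the final step. The classification of minimal symplectic $4$-manifolds of Kodaira dimension zero only tells you that $Z$ has the \emph{rational} homology of a K3 surface (it is a statement about Betti numbers), not the integral homology; your parenthetical claim that ``in the K3 case it has the integral homology and intersection form of a K3 surface'' is not part of that theorem. A priori $H_1(Z;\Z)$ could be a nontrivial finite group (compare the Enriques surface, which has $H_1=\Z/2$), and then $H_2(Z;\Z)$ could have torsion as well, so the conclusion $H_1(Z;\Z)=0$, $H_2(Z;\Z)=\Z^{22}$ does not yet follow. The paper closes this gap with a covering-space argument: if $H_1(Z;\Z)$ were a nontrivial (necessarily torsion) group of order $n>1$, the kernel of $\pi_1(Z)\to H_1(Z)$ would define a degree-$n$ cover $\tilde Z\to Z$ which is again symplectic of Kodaira dimension zero, hence satisfies $\sigma(\tilde Z)\ge -16$ by the Betti-number bounds of Bauer and Li; but $\sigma(\tilde Z)=n\,\sigma(Z)=-16n$, forcing $n=1$. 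Once $H_1(Z;\Z)=0$, Poincar\'e duality and universal coefficients give $H_3(Z;\Z)\cong H^1(Z;\Z)=0$ and show $H_2(Z;\Z)\cong H^2(Z;\Z)$ is torsion-free. You need some argument of this kind to upgrade from rational to integral homology.
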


\begin{proof}
We can easily verify that $K_Z\cdot[\omega_Z] = 0$, where $K_Z$ is the canonical class of $(Z,\omega_Z)$.  Indeed, we can express it as a sum $K_Z|_W \cdot [\omega] + K_Z|_{X_g} \cdot [\omega_g]$, and in the first term we have $[\omega]=0$ since the form $\omega$ is exact, while in the second term we have $K_Z|_{X_g} = 0$ because $X_g$ has trivial canonical class.  Moreover, we have $b_2^+(Z) \geq b_2^+(X_g) = 2$, with the latter equality provided by Proposition~\ref{prop:cap-betti}.

Since $b_2^+(Z) \geq 2$ and $K_Z \cdot [\omega_Z] = 0$, it follows from Taubes \cite{taubes-more} that the only Seiberg-Witten basic classes on $Z$ are $\pm K_Z$.  Further work of Taubes \cite{taubes-sw-gr} then shows that $K_Z = 0$, hence $0$ is the only basic class: indeed, $K_Z$ is Poincar\'e dual to an embedded symplectic surface $\Sigma$, and we have $K_Z \cdot [\omega_Z] = \int_\Sigma \omega_Z \geq 0$ with equality only if $[\Sigma] = 0$.  It follows that $Z$ must be symplectically minimal, since otherwise the blow-up formula \cite{fs-blowup} implies that there would be at least two basic classes.

We have now shown that $(Z,\omega_Z)$ is minimal with trivial canonical class, and this proves that its symplectic Kodaira dimension \cite{li-kodaira-zero} is zero.  By work of Morgan--Szab\'o \cite{morgan-szabo}, Bauer \cite{bauer}, and Li \cite{li-quaternionic}, it follows that $Z$ has the rational homology of a K3 surface, an Enriques surface, or a $T^2$-bundle over $T^2$.  The latter two cases imply $b_2(Z)=10$ and $b_2(Z) \leq 6$ respectively, and neither of these can happen -- we already know that $b_2(Z) \geq b_2^-(Z) \geq b_2^-(X_g) = 19$ -- so $Z$ is a rational homology K3.

Finally, if $H_1(Z)$ is nontrivial, then it is torsion and so the kernel of the abelianization map $\pi_1(Z) \to H_1(Z)$ has finite index in $\pi_1(Z)$.  If the corresponding finite cover $(\tilde{Z},\tilde{\omega}_Z) \to (Z,\omega_Z)$ has degree $n=|H_1(Z)|$, then $(\tilde{Z},\tilde{\omega}_Z)$ has symplectic Kodaira dimension zero and hence signature at least $-16$ \cite{bauer,li-quaternionic}, whereas $\sigma(\tilde{Z}) = -16n$, so we must have $n=1$ and thus $H_1(Z)=0$.  It follows from Poincar\'e duality and the universal coefficient theorem that $H_3(Z) \cong H^1(Z) = 0$, and that $H_2(Z) \cong H^2(Z)$ is torsion-free since $H_1(Z)=0$ is.
\end{proof}

\begin{remark} \label{rem:simply-connected}
In the above argument, we see that $Z$ has even intersection form since $K_Z = 0$ is a characteristic class.  If we can show that $\pi_1(Z) = 1$, then $Z$ will be a simply connected, even, smooth 4-manifold with $b_2^+(Z)=3$ and $b_2^-(Z)=19$, implying that it is homotopy equivalent and hence homeomorphic to a K3 surface \cite{freedman}.

For example, if $(W,J)$ is a Stein filling of $(Y_g,\xi_g)$ then the inclusion $Y_g \hookrightarrow W$ induces a surjection $\pi_1(Y_g) \to \pi_1(W)$, and the cap $X_g$ is simply connected, so van Kampen's theorem says that $\pi_1(Z) = \pi_1(W) \ast_{\pi_1(Y_g)} 1 = 1$.  Thus if $(W,J)$ is a Stein filling then $Z$ is homeomorphic to a K3 surface.
\end{remark}

\begin{corollary} \label{cor:exact-betti}
If $(W,\omega)$ is an exact symplectic filling of $(Y_g,\xi_g)$, then $W$ has the same Betti numbers as the disk cotangent bundle $DT^*\Sigma_g$, namely $b_3(W)=0$, $b_2^+(W)=1$ and $b_2^-(W) = b_2^0(W) = 0$, and $b_1(W)=2g$.
\end{corollary}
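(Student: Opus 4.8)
The plan is to extract the Betti numbers of $W$ from those of the closed manifold $Z = W \cup_{Y_g} X_g$, which the preceding proposition identifies as an integral homology K3, together with the known Betti numbers of the cap $X_g$ from Proposition~\ref{prop:cap-betti}. The main tool is the Mayer--Vietoris sequence for the decomposition $Z = W \cup_{Y_g} X_g$, exactly parallel to the computation already carried out for $K3 = D_g \cup_{Y_g} X_g$ in Proposition~\ref{prop:cap-homology}. Since $H_1(Z) = H_3(Z) = 0$ and $H_2(Z) = \Z^{22}$, and since $H_1(Y_g) \cong \Z^{2g} \oplus \Z/(2g-2)$, $H_2(Y_g) \cong \Z^{2g}$ are known, the sequence reads (after tensoring with $\Q$ to avoid torsion bookkeeping)
\[ 0 \to H_2(Y_g;\Q) \to H_2(W;\Q)\oplus H_2(X_g;\Q) \to H_2(Z;\Q) \to H_1(Y_g;\Q) \to H_1(W;\Q)\oplus H_1(X_g;\Q) \to 0, \]
and a similar (mostly vanishing) tail in degree three. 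Here $H_1(X_g;\Q) = 0$ and $H_2(X_g;\Q) = \Q^{21}$ by Propositions~\ref{prop:cap-homology} and~\ref{prop:cap-betti}, while $\dim H_i(Y_g;\Q)$ is $1, 2g, 2g, 1$ for $i = 0,1,2,3$.

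First I would handle $b_3(W)$ and $b_1(W)$. Using the degree-three portion of Mayer--Vietoris together with $H_3(Z;\Q) = 0$, $H_3(X_g;\Q) = 0$, and $H_3(Y_g;\Q) = \Q$, one finds that $H_3(W;\Q)$ injects into $H_2(Y_g;\Q)$ in a way compatible with the degree-two sequence; chasing this, combined with the fact (used already for $X_g$ in Proposition~\ref{prop:cap-homology}) that for a compact oriented $4$-manifold with boundary the map $H_3(W) \to H_3(W,\partial W) \cong H^1(W)$ is injective, should force $b_3(W) = 0$. For $b_1(W)$, exactness at $H_1(W;\Q)\oplus H_1(X_g;\Q)$ shows this group is a quotient of $H_1(Y_g;\Q) = \Q^{2g}$, so $b_1(W) \le 2g$; the reverse inequality comes from the fact that the composite $H_2(K3;\Q) \to H_1(Y_g;\Q)$ has image of rank $0$ in the $K3$ decomposition, so comparing that computation with the $Z$ one pins $b_1(W) = 2g$. (Alternatively, $b_1(W) = 2g$ follows directly from $H_1(W) \cong \Z^{2g} \oplus (\text{torsion})$, which will be established in the main theorem, but I would prefer to keep this corollary self-contained.)

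Next I would compute $b_2(W)$ and then split it into $b_2^+, b_2^-, b_2^0$. Adding up dimensions in the degree-two Mayer--Vietoris sequence and alternating signs gives
\[ \dim H_2(W;\Q) = 22 + 2g + 0 - 21 - 2g + (b_1(W) - 0) = 1 + b_1(W) - (2g - b_1(W))? \]
— more carefully, the Euler-characteristic count of the five-term exact sequence yields $2g - (b_2(W)+21) + 22 - 2g + b_1(W) = 0$, so $b_2(W) = 1 + b_1(W) - 2g = 1$. To see $b_2^-(W) = b_2^0(W) = 0$: the class in $H_2(W;\Q)$ must map to a class of positive square in $H_2(Z;\Q)$, because $b_2^+(Z) = 3$ and the cap already contributes a $2$-dimensional positive-definite subspace (Proposition~\ref{prop:cap-betti}), leaving room for exactly one more positive direction, which $W$ must supply; meanwhile $W$ is an exact symplectic filling, so it carries a symplectic form with $[\omega]^2 > 0$ up to the usual caveat — concretely, an exact filling cannot have $b_2^+ = 0$ for the same K3-gluing reason, and it cannot have $b_2^- > 0$ or $b_2^0 > 0$ since that would push $b_2(W) > 1$. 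Thus the unique generator is positive, giving $b_2^+(W) = 1$ and $b_2^-(W) = b_2^0(W) = 0$.

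The main obstacle I anticipate is the careful torsion-free bookkeeping in the Mayer--Vietoris sequence: the degree-two sequence for $Z$ involves the torsion subgroup $\Z/(2g-2) \subset H_1(Y_g;\Z)$ exactly as in Proposition~\ref{prop:cap-homology}, and one must verify that the connecting map $H_2(Z) \to H_1(Y_g)$ hits this torsion (so that the rank count above is valid) and that $H_2(W;\Z)$ has no torsion of its own. The former follows because $i_1 : H_1(Y_g) \to H_1(W)\oplus H_1(X_g)$ must kill the torsion if $H_1(W)$'s torsion is to be accounted for correctly — but since here we only claim Betti numbers, working over $\Q$ throughout sidesteps this entirely, and the rank computation goes through cleanly. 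The one genuinely non-formal input is ruling out $b_2^+(W) = 0$, which I would justify by noting that $b_2^+(Z) = 3$ while Novikov additivity and $\sigma(X_g) = -17$ force $\sigma(W) = \sigma(Z) - \sigma(X_g) = -16 - (-17) = 1 > 0$, so $W$ cannot be negative definite; combined with $b_2(W) = 1$ this immediately gives $b_2^+(W) = 1$ and the rest.
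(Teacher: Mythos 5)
Your overall strategy is exactly the paper's: glue on the cap $X_g$, use Novikov additivity with $\sigma(X_g)=-17$ to get $\sigma(W)=1$ (hence $b_2^+(W)\geq 1$), and run the rational Mayer--Vietoris sequence for $Z = W\cup_{Y_g}X_g$. However, several individual steps as written are wrong. First, you take $H_2(X_g;\Q)=\Q^{21}$, but Proposition~\ref{prop:cap-homology} gives $H_2(X_g;\Z)\cong\Z^{21}\oplus\Z^{2g}$, so $b_2(X_g)=21+2g$; Proposition~\ref{prop:cap-betti} only accounts for the nondegenerate part ($b_2^++b_2^-=21$), and the extra $\Z^{2g}$ is the null summand coming from $H_2(Y_g)$. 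With your value the alternating sum gives $b_2(W)=1+b_1(W)$, not the $b_2(W)=1+b_1(W)-2g$ you then write down; the latter is the correct relation, but it only follows once $b_2(X_g)=21+2g$ is used. Second, your justification of $b_1(W)\geq 2g$ (``comparing the $K3$ computation with the $Z$ one'') is not an argument: the clean route is that $b_2(W)\geq 1$ (from $\sigma(W)=1$) together with $b_2(W)=1+b_1(W)-2g$ forces $b_1(W)\geq 2g$, and exactness at the tail gives $b_1(W)\leq 2g$. You have all these ingredients but do not assemble them this way.

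Third, the $b_3$ argument does not work as stated. In the Mayer--Vietoris sequence $H_3(W;\Q)$ does not inject into $H_2(Y_g;\Q)$ (that would be the connecting map out of $H_3(Z;\Q)$, which is zero here), and the injectivity of $H_3(W)\to H^1(W)$ only gives torsion-freeness since $H^1(W;\Q)\cong\Q^{2g}\neq 0$, unlike the situation for $X_g$. The correct one-line argument is the segment
\[ 0 = H_4(W;\Q)\oplus H_4(X_g;\Q) \to H_4(Z;\Q) \to H_3(Y_g;\Q) \to H_3(W;\Q)\oplus H_3(X_g;\Q) \to H_3(Z;\Q)=0, \]
which reads $0\to\Q\to\Q\to\Q^{b_3(W)}\to 0$ and forces $b_3(W)=0$. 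Finally, the aside that ``$W$ carries a symplectic form with $[\omega]^2>0$'' is backwards: $\omega$ is exact, so $[\omega]=0$; the determination of $b_2^\pm$ should rest solely on $b_2(W)=1$ and $\sigma(W)=1$, which is indeed how you conclude.
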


\begin{proof}
We glue the cap $(X_g,\omega_g)$ to $(W,\omega)$ to form $Z$, which is a homology K3 and thus has signature $-16$.  Novikov additivity says that $-16 = \sigma(W) + \sigma(X_g)$, and from Proposition~\ref{prop:cap-betti} we conclude that $\sigma(W) = 1$.  In particular, we have $b_2^+(W) \geq 1$.

Now we consider the Mayer-Vietoris sequence for $Z = W \cup_{Y_g} X_g$ with coefficients in $\Q$: since $b_2(Z)=22$ and $b_2(Y_g)=b_1(Y_g)=2g$, the part of the sequence between $H_3(Z;\Q)=0$ and $H_1(Z;\Q)=0$ has the form
\[ 0 \to \Q^{2g} \to \Q^{b_2(W)} \oplus \Q^{21+2g} \to \Q^{22} \to \Q^{2g} \to \Q^{b_1(W)} \oplus \Q^0 \to 0. \]
Since we already know that $b_2(W) \geq 1$, an easy exercise shows that in fact $b_2(W)=1$ and $b_1(W)=2g$; then $\sigma(W)=1$ implies that $b_2^+(W)=1$ and $b_2^-(W)=b_2^0(W)=0$ as claimed.  Similarly, between $H_4(W;\Q) \oplus H_4(X_g;\Q) = 0$ and $H_3(Z;\Q) = 0$, we have
\[ 0 \to \Q \to \Q \to \Q^{b_3(W)} \oplus \Q^0 \to 0 \]
and this can only be exact if $b_3(W) = 0$.
\end{proof}

\begin{theorem} \label{thm:exact-homology}
If $(W,\omega)$ is an exact filling of $(Y_g,\xi_g)$, then for some integer $d$ such that $d^2$ divides $g-1$ we have $H_3(W;\Z) = 0$; $H_2(W;\Z) \cong \Z$, with intersection form $\langle \frac{2g-2}{d^2}\rangle$; and $H_1(W;\Z) \cong \Z^{2g} \oplus \Z/d\Z$.
\end{theorem}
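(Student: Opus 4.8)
The plan is to read everything off the Mayer--Vietoris sequence of $Z = W \cup_{Y_g} X_g$ with $\Z$ coefficients, feeding in the Betti numbers of $W$ from Corollary~\ref{cor:exact-betti}, the structure of $H_2(X_g)$ from Proposition~\ref{prop:cap-homology}, and the sublattice picture in Remark~\ref{rem:Xg-sublattice}. The vanishing $H_3(W;\Z)=0$ is immediate: $b_3(W)=0$ by Corollary~\ref{cor:exact-betti}, while the general fact used in the proof of Proposition~\ref{prop:cap-homology} shows that $H_3(W;\Z)$ injects into the torsion-free group $H^1(W;\Z)$, so it is both torsion and torsion-free.

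For the rest, since $H_1(X_g)=0$ and $Z$ is an integer homology $K3$, a portion of the Mayer--Vietoris sequence reads
\[ 0 \to H_2(Y_g) \xrightarrow{\phi} H_2(W) \oplus H_2(X_g) \xrightarrow{j_*} H_2(Z) \xrightarrow{\partial} H_1(Y_g) \xrightarrow{\psi} H_1(W) \to 0. \]
Because $H_2(X_g)\cong\Z^{21+2g}$ and $H_2(Z)\cong\Z^{22}$ are free, any torsion in $H_2(W)$ lands in $\ker j_* = \img\phi\cong H_2(Y_g)\cong\Z^{2g}$, hence vanishes, so $H_2(W)\cong\Z$ by Corollary~\ref{cor:exact-betti}. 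Since $b_2^0(W)=0$ and $\sigma(W)=1$, the intersection form on $H_2(W)=\Z$ is $\langle n\rangle$ for some $n>0$, which has trivial radical; as the classes in the image of $H_2(Y_g)\to H_2(W)$ pair trivially with all of $H_2(W)$ (they die in $H_2(W,\partial W)$), that map is zero, and therefore $\img\phi$ is exactly the $\Z^{2g}$ summand of $H_2(X_g)$ identified in Proposition~\ref{prop:cap-homology}. Quotienting by $\ker j_*$ turns $j_*$ into an injection $\bar j\colon H_2(W)\oplus\Z^{21}\cong\Z^{22}\hookrightarrow H_2(Z)\cong\Z^{22}$ with finite cokernel, and exactness gives $H_1(W)\cong H_1(Y_g)/\img\partial$ with $|\img\partial| = |\operatorname{coker}\bar j| = |\det\bar j|$. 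A finite subgroup of $H_1(Y_g)\cong\Z^{2g}\oplus\Z/(2g-2)$ must sit inside the torsion subgroup $\Z/(2g-2)$, so $\img\partial$ is its unique subgroup of order $m := |\det\bar j|$; hence $m\mid 2g-2$ and $H_1(W)\cong\Z^{2g}\oplus\Z/d$ with $d := \tfrac{2g-2}{m}$.

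To pin down $n$ and $d$ I would use the intersection form. The inclusions of $W$ and $X_g$ into $Z$ preserve self-intersections and make $H_2(W)$ orthogonal to the $\Z^{21}$ summand of $H_2(X_g)$ (representatives can be pushed off $Y_g$), so $\bar j$ pulls the unimodular $K3$ form on $H_2(Z)$ back to $\langle n\rangle\oplus Q_g$; comparing discriminants yields $|n|\,|\det Q_g| = |\det\bar j|^2 = m^2$. By Remark~\ref{rem:Xg-sublattice}, $\langle 2g-2\rangle\oplus Q_g$ embeds as an index-$(2g-2)$ sublattice of the unimodular lattice $H_2(K3)$, so $(2g-2)|\det Q_g| = (2g-2)^2$ and thus $|\det Q_g| = 2g-2$. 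Substituting $m = \tfrac{2g-2}{d}$ gives $n = \tfrac{2g-2}{d^2}$, so the intersection form of $W$ is $\langle \tfrac{2g-2}{d^2}\rangle$. Finally, since $K_Z = 0$ the intersection form of $Z$ is even, so the self-intersection $n$ of the image of a generator of $H_2(W)$ is even; writing $n = \tfrac{2(g-1)}{d^2}$, evenness is precisely the condition $d^2\mid g-1$, which finishes the proof.

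I expect the last point to be the crux, or at any rate the thing most easily missed: the homological and discriminant bookkeeping above only forces $n = \tfrac{2g-2}{d^2}$ to be an integer, i.e.\ $d^2\mid 2g-2$, and it is the evenness of the $K3$ form (equivalently $K_Z = 0$) that upgrades this to $d^2\mid g-1$. A smaller subtlety worth care is the vanishing of $H_2(Y_g)\to H_2(W)$, which uses $b_2^0(W)=0$ to know that the rank-one form $\langle n\rangle$ is non-degenerate.
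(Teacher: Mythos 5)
Your proposal is correct and follows essentially the same route as the paper: the same Mayer--Vietoris sequence for $Z = W\cup_{Y_g}X_g$, the same identification $H_1(W)\cong H_1(Y_g)/\img\partial$ with $\lvert\img\partial\rvert = \lvert\det\bar j\rvert$, the same discriminant comparison using $\lvert\det Q_g\rvert = 2g-2$ from the $DT^*\Sigma_g$ case (Remark~\ref{rem:Xg-sublattice}), and the same final appeal to evenness of the $K3$ form to upgrade $d^2\mid 2g-2$ to $d^2\mid g-1$. You have also correctly identified the evenness step as the crux.
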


\begin{proof}
We recall from the proof of Proposition~\ref{prop:cap-homology} that $H_3(W)$ is torsion-free, and so $b_3(W)=0$ implies that $H_3(W)=0$.

Now we write $Z = W \cup_{Y_g} X_g$, with $Z$ an integer homology K3, and consider the Mayer-Vietoris sequence over $\Z$:
\[ 0 \to H_2(Y_g) \xrightarrow{i} H_2(W) \oplus H_2(X_g) \xrightarrow{j} H_2(Z) \xrightarrow{\delta} H_1(Y_g). \]
We know that $H_1(Y_g) = \Z^{2g} \oplus \Z/(2g-2)$, hence $H_2(Y_g) = H^1(Y_g) = \Z^{2g}$, and similarly we know the homology of $X_g$ from Proposition~\ref{prop:cap-homology}.  Any torsion in $H_2(W)$ must lie in $\ker(j) = \img(i)$ since $H_2(Z) = \Z^{22}$ is free, but $\img(i) \cong \Z^{2g}$ is also free, so $H_2(W)$ is torsion-free and thus $H_2(W) = \Z$ by Corollary~\ref{cor:exact-betti}. 

Since $H_2(W)$ is positive definite and both $H_2(Y_g)$ and $H_2(W)$ are torsion-free, the map $H_2(Y_g) \to H_2(W)$ is zero, and we know that $H_2(X_g)$ decomposes as $\Z^{21} \oplus H_2(Y_g)$.  Thus we can split off the $H_2(Y_g) \xrightarrow{\sim} H_2(Y_g)$ component of $i$ in the above sequence, leaving us with
\[ 0 \to \Z \oplus \Z^{21} \to H_2(Z) \xrightarrow{\delta} H_1(Y_g). \]
Let $\Lambda \subset H_2(Z)$ be the image of $\Z \oplus \Z^{21}$; then $\Lambda$ is a sublattice of rank 22, so it has finite index, which must equal $\lvert\img(\delta)\rvert$.  But from $H_1(Y_g) = \Z^{2g} \oplus \Z/(2g-2)$ it follows that $\img(\delta)$ is a subgroup of $\Z/(2g-2)$, which then has order $\frac{2g-2}{d}$ for some integer $d \geq 1$.  Since $H_1(X_g) = H_1(Z) = 0$, the portion $H_2(Z) \xrightarrow{\delta} H_1(Y_g) \to H_1(W) \to 0$ of the sequence shows that $H_1(W)$ is isomorphic to $H_1(Y_g) / \img(\delta) \cong \Z^{2g} \oplus \Z/d$.

Let $e_1,\dots,e_{22}$ be an integral basis of $\Lambda$, where $e_1$ generates the direct summand $H_2(W) \cong \Z$ and $e_2,\dots,e_{22}$ is an integral basis of $\Z^{21} \subset H_2(X_g)$, and form a matrix $A$ whose columns are the elements $e_1,\dots,e_{22}$ expressed in an integral basis of $H_2(Z) \cong \Z^{22}$.  Letting $Q_{Z}$ be the intersection form on $H_2(Z)$ in this latter basis, then, the intersection form on $\Lambda$ in the basis $\{e_i\}$ is given by $Q_\Lambda =A^{T}Q_{Z}A$, and we have $\det(Q_\Lambda) = \pm \left(\frac{2g-2}{d}\right)^2$ since $Q_{Z}$ is unimodular and $\lvert\det(A)\rvert = [H_2(Z):\Lambda] = \frac{2g-2}{d}$.

On the other hand, we can write $Q_\Lambda$ in block form with respect to this basis as $\left(\begin{smallmatrix}e_1\cdot e_1&0\\0&Q_g\end{smallmatrix}\right)$, where $Q_g$ is the nondegenerate intersection form on $\Z^{21} \subset H_2(X_g)$; note that $\lvert\det(Q_g)\rvert$ does not depend on $W$ but only on the cap $X_g$.  From this it is clear that $\det(Q_\Lambda) = (e_1\cdot e_1) \det(Q_g)$, so it follows that $(e_1\cdot e_1) \lvert\det(Q_g)\rvert = \left(\frac{2g-2}{d}\right)^2$.  In the case where $W$ is the disk cotangent bundle $DT^*\Sigma_g$ we have $e_1\cdot e_1 = \Sigma_g^2 = 2g-2$ and $d=1$ (see Remark~\ref{rem:Xg-sublattice}), so it follows that $\lvert\det(Q_g)\rvert$ is equal to $2g-2$.  We conclude that
\[ e_1\cdot e_1 = \frac{\left(\frac{2g-2}{d}\right)^2}{2g-2} = \frac{2g-2}{d^2}. \]
Since $Z$ is a homology K3 it has an even intersection form, so $e_1\cdot e_1$ must be an even integer and we have $d^2 \mid g-1$, completing the proof.
\end{proof}

\begin{corollary}
If $g-1$ is square-free then any exact filling of $(Y_g,\xi_g)$ has the same homology and intersection form as the disk cotangent bundle $DT^*\Sigma_g$.
\end{corollary}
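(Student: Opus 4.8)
The plan is to obtain this as an immediate specialization of Theorem~\ref{thm:exact-homology}. First I would apply that theorem to the exact filling $(W,\omega)$, producing an integer $d \geq 1$ with $d^2 \mid g-1$ for which $H_3(W;\Z) = 0$, $H_2(W;\Z) \cong \Z$ with intersection form $\langle \frac{2g-2}{d^2}\rangle$, and $H_1(W;\Z) \cong \Z^{2g} \oplus \Z/d\Z$.

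Next I would invoke the square-freeness hypothesis to pin down $d$. If $d > 1$, pick a prime $p \mid d$; then $p^2 \mid d^2 \mid g-1$, contradicting the assumption that $g-1$ is square-free, so $d = 1$. Substituting $d=1$ into the conclusion of Theorem~\ref{thm:exact-homology} gives $H_1(W;\Z) \cong \Z^{2g}$, $H_2(W;\Z) \cong \Z$ with intersection form $\langle 2g-2 \rangle$, and $H_3(W;\Z) = 0$.

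Finally I would compare these groups with those of the disk cotangent bundle. Since $DT^*\Sigma_g$ deformation retracts onto its zero section, a copy of $\Sigma_g$ whose normal bundle has Euler number $2g-2$, we have $H_1(DT^*\Sigma_g;\Z) \cong \Z^{2g}$, $H_2(DT^*\Sigma_g;\Z) \cong \Z$ generated by $[\Sigma_g]$ with $[\Sigma_g]^2 = 2g-2$, and $H_3(DT^*\Sigma_g;\Z)=0$ (while $H_0 \cong \Z$ and $H_4 = 0$ on both sides, since each has nonempty boundary). These match the groups just computed, so $W$ has the same integral homology and intersection form as $DT^*\Sigma_g$. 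I do not expect any real obstacle here: the only inputs beyond Theorem~\ref{thm:exact-homology} are the elementary fact that a square-free integer has no nontrivial square divisor and the standard homology computation for a disk bundle over a surface, so the entire substance of the corollary lies in the already-established theorem.
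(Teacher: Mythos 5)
Your argument is correct and is exactly the specialization the paper intends (the corollary is stated without proof precisely because it follows immediately from Theorem~\ref{thm:exact-homology} once square-freeness forces $d=1$). Nothing is missing.
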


\section{The topology of Stein fillings}
\label{sec:stein}

\subsection{The homology of a Stein filling}

In this section we further investigate the topology of a filling $(W,\omega)$ of $(Y_g,\xi_g)$ which is not only exact but Stein; in this case we denote it by $(W,J)$ to avoid confusion.  In this case $W$ has a handle decomposition consisting of only 0-, 1-, and 2-handles, from which it follows classically that the inclusion $i: Y_g \hookrightarrow W$ induces a surjection $\pi_1(Y_g) \xrightarrow{i_*} \pi_1(W)$.  We note that since $Y_g$ is a circle bundle over $\Sigma_g$ with Euler number $2g-2$, its fundamental group has presentation
\[ \pi_1(Y_g) = \left\langle a_1,\dots,a_g,b_1,\dots,b_g,t \mathrel{}\middle|\mathrel{} \prod_{i=1}^g [a_i,b_i] = t^{2g-2}, [a_i,t]=[b_i,t] = 1 \right\rangle, \]
where $t$ represents a circle fiber and is central.  We will define $2g+1$ distinguished elements of $\pi_1(W)$ by
\begin{align*}
\alpha_j &= i_*(a_j), & \beta_j &= i_*(b_j), & \tau &= i_*(t)
\end{align*}
for $j=1,\dots,g$.  Since $i_*$ is surjective, we know that $\tau$ is central and that these $2g+1$ elements generate $\pi_1(W)$; in fact, it turns out that $\alpha_1,\dots,\alpha_g$ and $\beta_1,\dots,\beta_g$ suffice.

\begin{proposition} \label{prop:stein-meridian}
Suppose that $(W,J)$ is a Stein filling of $(Y_g,\xi_g)$ as above, and let $H \subset \pi_1(Y_g)$ denote the subgroup generated by $a_1,\dots,a_g,b_1,\dots,b_g$.  If $i_*: \pi_1(Y_g) \to \pi_1(W)$ is the inclusion-induced map, then $i_*|_H$ is surjective; in other words, $i_*(H) = \pi_1(W)$, and so $\pi_1(W)$ is generated by the elements $\alpha_1,\dots,\alpha_g$ and $\beta_1,\dots,\beta_g$.
\end{proposition}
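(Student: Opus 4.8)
The statement is equivalent to the single assertion that $\tau \in i_*(H)$, since $i_*$ is surjective and $\pi_1(W)$ is generated by $\alpha_1,\dots,\alpha_g,\beta_1,\dots,\beta_g,\tau$. The plan is to reduce this to a purely homological property of $W$ and then establish that property using the Stein hypothesis.

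First I would set $N = i_*(H) = \langle\alpha_1,\dots,\alpha_g,\beta_1,\dots,\beta_g\rangle$. The relation $\prod_i[a_i,b_i]=t^{2g-2}$ in $\pi_1(Y_g)$ gives $\tau^{2g-2}=\prod_i[\alpha_i,\beta_i]\in N$. Since $\tau$ is central it normalizes $N$, and together with $\pi_1(W)=N\langle\tau\rangle$ this shows $N\trianglelefteq\pi_1(W)$ with quotient cyclic, generated by the image of $\tau$, of order dividing $2g-2$; moreover, centrality of $\tau$ forces every commutator in $\pi_1(W)$ to be a commutator of elements of $N$, so $[\pi_1(W),\pi_1(W)]\le N$. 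Consequently $\tau\in N$ if and only if the class $[\tau]\in H_1(W;\Z)$ lies in the subgroup generated by $[\alpha_1],\dots,[\alpha_g],[\beta_1],\dots,[\beta_g]$. Here I would quote the computation of $i_*$ on first homology from the proof of Theorem~\ref{thm:exact-homology}: one has $H_1(W;\Z)\cong\Z^{2g}\oplus\Z/d$, the free summand is exactly the subgroup generated by the $[\alpha_j]$ and $[\beta_j]$, and $[\tau]$ generates the $\Z/d$ summand. Thus $\tau\in N$ --- equivalently, the proposition --- holds if and only if $d=1$, i.e.\ if and only if $H_1(W;\Z)$ is torsion-free; and as soon as $d=1$ we get $[\tau]=0$, hence $\tau\in[\pi_1(W),\pi_1(W)]\le N$, which completes the argument.

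So the real task is to show that $d=1$ when $W$ is Stein. The point is that the Mayer--Vietoris and lattice-embedding arguments of Section~\ref{sec:exact} are insensitive to $d$ (they only produce $d^2\mid g-1$), so a genuinely new, Stein-specific input is needed. My plan is to work with the closed manifold $Z=W\cup_{Y_g}X_g$: since $W$ is Stein the inclusion $Y_g\hookrightarrow W$ is $\pi_1$-surjective, so van Kampen together with the simple connectivity of $X_g$ gives $\pi_1(Z)=1$, hence $Z$ is homeomorphic to the K3 surface (Remark~\ref{rem:simply-connected}), and in particular $c_1(W,J)=0$ because $Z$ has trivial canonical class while $[\omega]=0$. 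I would then combine the rigidity of a Stein filling --- for example the adjunction inequality for embedded surfaces in a Stein domain, applied to a minimal-genus representative of the generator $e_1$ of $H_2(W)\cong\Z$ --- with the constraints on how the lattice $Q_g$ attached to the cap (which is independent of $W$) must sit inside $H_2(Z)$, which is isomorphic to the K3 lattice, in order to force $e_1\cdot e_1=2g-2$, i.e.\ $d=1$.

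The reduction in the first two paragraphs is routine; the hard part will be this last step, ruling out $d>1$. Converting ``$W$ is Stein'' into the arithmetic conclusion $d=1$ is the delicate point, since all of the homological bookkeeping available for general exact fillings is agnostic about $d$. I expect the argument there to require either a sharp adjunction-type estimate inside $W$, a careful analysis of a Weinstein handle presentation of $W$, or a more refined use of the (homeomorphism) type of $Z$ as a K3 surface.
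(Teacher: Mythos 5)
Your first two paragraphs are correct: the proposition is equivalent to $\tau\in i_*(H)$, and since $\tau$ is central and $\pi_1(W)=i_*(H)\langle\tau\rangle$, every commutator in $\pi_1(W)$ lies in $i_*(H)$, so the question reduces to whether $[\tau]$ vanishes in $H_1(W;\Z)\cong\Z^{2g}\oplus\Z/d$, i.e.\ to whether $d=1$. This is a legitimate (if inverted) reading of the paper's logic -- the paper proves the proposition first and then deduces $d=1$ from it in Theorem~\ref{thm:homology-stein} -- so your route is viable only if you can establish $d=1$ by independent means. That is exactly the step you do not supply: the third paragraph is a list of candidate techniques, not an argument, and you acknowledge as much. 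Worse, the one concrete tool you name points the wrong way. The class $e_1$ has square $(2g-2)/d^2$, so ruling out $d>1$ requires a \emph{lower} bound on $e_1\cdot e_1$, whereas the adjunction inequality for surfaces in Stein domains gives an \emph{upper} bound ($[\Sigma]^2+\lvert\langle c_1,[\Sigma]\rangle\rvert\le 2g(\Sigma)-2$) and so cannot exclude $e_1\cdot e_1<2g-2$. The lattice-embedding constraints from Remark~\ref{rem:Xg-sublattice} are likewise already baked into the proof of Theorem~\ref{thm:exact-homology} and, as you note, only yield $d^2\mid g-1$. So the proposal has a genuine gap at its central step.

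For comparison, the paper's proof is elementary and does not pass through $d$ at all. One observes that $i_*(H)$ is normal of some index $k\mid 2g-2$ and takes the corresponding $k$-fold cover $\tilde W\to W$. Because $\tilde W$ is again a Stein domain, its boundary $\tilde Y$ is \emph{connected}; one then checks, using normality of the cover and the fact that no proper power $t^l$ with $l<k$ lies in the relevant subgroup, that $\tilde Y\to Y_g$ unwraps the circle fibers, so $\tilde Y$ is a circle bundle over the \emph{same} base $\Sigma_g$ with nonzero Euler number, whence $b_1(\tilde Y)=2g$ and $b_1(\tilde W)\le 2g$. Comparing $\chi(\tilde W)=k(2-2g)$ with $\chi(\tilde W)=1-b_1(\tilde W)+b_2(\tilde W)\ge 1-2g$ forces $k=1$. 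The essential Stein input is thus connectedness of the boundary plus multiplicativity of the Euler characteristic under covers -- a mechanism quite different from, and much softer than, the gauge-theoretic or lattice-theoretic input you were reaching for.
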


\begin{proof}
It is not hard to check that $H$ is normal of index $2g-2$, since the only other generator in the above presentation is central (namely $t$) and the quotient $\pi_1(Y_g)/H$ is $\langle t \mid t^{2g-2}=1\rangle$.  Since $i_*$ is surjective, it is also easy to see that $i_*(H)$ is a normal subgroup of $\pi_1(W)$.  Moreover, $i_*$ induces a map
\[ \pi_1(Y_g)/H \to \pi_1(W)/i_*(H) \]
between the respective quotients, and this map is surjective, so since $\pi_1(Y_g)/H$ is a finite cyclic group generated by $[t]$ it follows that $\pi_1(W)/i_*(H)$ is also a finite cyclic group which is generated by $[\tau]$.  Thus $i_*(H)$ is a normal subgroup of $\pi_1(W)$ of some finite index $k \geq 1$ which divides $|\pi_1(Y_g)/H| = 2g-2$.

Let $p:\tilde{W} \to W$ be a finite $k$-fold covering such that $p_*(\pi_1(\tilde{W})) = i_*(H)$.  Then $\tilde{W}$ is also a Stein domain, and its boundary $\tilde{Y} = \partial \tilde{W}$ is a $k$-fold cover of $Y_g = \partial W$, which must be connected since Stein domains have connected boundary.  Thus $G = (p|_{\tilde{Y}})_*(\pi_1(\tilde{Y}))$ is an index-$k$ subgroup of $\pi_1(Y_g)$.  The cover $\tilde{Y} \to Y_g$ is normal, implying that $G$ is moreover a normal subgroup of $\pi_1(Y_g)$: indeed, the cover $\tilde{W} \to W$ is normal since $i_*(H)$ is a normal subgroup of $\pi_1(W)$, so its deck transformations act transitively on each fiber, and these restrict to deck transformations of $\tilde{Y}$, so the latter act transitively on fibers of $\tilde{Y}$.

We now consider the commutative diagram
\[ \xymatrix{
\tilde{Y} \ar[r]^{\tilde{i}} \ar[d]_{p|_{\tilde{Y}}} & \tilde{W} \ar[d]^{p} \\
Y_g \ar[r]^i & W
} \]
where $i$ and $\tilde{i}$ are the respective inclusion maps of each manifold into the Stein domain which it bounds, and thus induce surjections on the respective fundamental groups.  We have $p_*(\pi_1(\tilde{W})) = i_*(H)$ by construction, and since $\tilde{i}_*(\pi_1(\tilde{Y})) = \pi_1(\tilde{W})$ we can write
\[ i_*(H) = p_*(\tilde{i}_*(\pi_1(\tilde{Y}))) = i_*((p|_{\tilde{Y}})_*(\pi_1(\tilde{Y}))) = i_*(G). \]
Thus if $t^j \in G$ for some $j$, then we have $\tau^j \in i_*(G) = i_*(H)$, and so $k$ divides $j$.

Now we consider the composition $\varphi: \tilde{Y} \xrightarrow{p} Y_g \to \Sigma_g$, where we are now using $p$ to denote the restriction $p|_{\tilde{Y}}$.  The preimage of a point $x \in \Sigma_g$ is a $k$-fold cover of the circle fiber above $x$ in $Y_g$, which we identify with $t$ (at least up to conjugation, since we should pick a base point).  If this preimage is disconnected, then one of its components is a circle $\gamma \subset \tilde{Y}$ which is an $l$-fold cover of the circle fiber in $Y_g$ for some $1 \leq l < k$.  Thus $p_*(\gamma)$ is conjugate to $t^l$; but $G$ is normal and does not contain $t^l$, so it cannot actually contain $p_*(\gamma)$ either.  We conclude that $\varphi^{-1}(x)$ is a circle, and hence that $\tilde{Y}$ is also a circle bundle over $\Sigma_g$.  Its Euler number is then $\frac{2g-2}{k}$, though we only need that it is nonzero: if it were zero, then the image under $p$ of a section would give a section of $Y_g$, which has nonzero Euler number.

From the above we see that $b_1(\tilde{Y}) = 2g$, and since $H_1(\tilde{Y})$ surjects onto $H_1(\tilde{W})$ it follows that $b_1(\tilde{W}) \leq 2g$, hence 
\[ \chi(\tilde{W}) = 1 - b_1(\tilde{W}) + b_2(\tilde{W}) \geq 1-2g. \]
But we also know that $\chi(\tilde{W}) = k\chi(W) = k(2-2g)$, so we have $k(2-2g) \geq 1-2g$, or equivalently $(k-1)(2-2g) \geq -1$.  Since $2-2g \leq -2$, this can only hold if $k=1$; but $k$ is the index of $i_*(H)$ in $\pi_1(W)$, so the two must be equal.
\end{proof}

\begin{theorem} \label{thm:homology-stein}
Let $(W,J)$ be a Stein filling of $(Y_g,\xi_g)$.  Then $W$ has the same integral homology and intersection form as the disk cotangent bundle $DT^*\Sigma_g$.  In particular, we have $H_1(W) \cong \Z^{2g}$, and the intersection form on $H_2(W) \cong \Z$ is $\langle 2g-2 \rangle$.
\end{theorem}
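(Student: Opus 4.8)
The plan is to combine Theorem~\ref{thm:exact-homology}, which already handles exact fillings, with Proposition~\ref{prop:stein-meridian}, which is special to Stein fillings. By Theorem~\ref{thm:exact-homology} we know that for some $d$ with $d^2 \mid g-1$ we have $H_3(W;\Z)=0$, $H_2(W;\Z)\cong\Z$ with intersection form $\langle\tfrac{2g-2}{d^2}\rangle$, and $H_1(W;\Z)\cong\Z^{2g}\oplus\Z/d\Z$; since $DT^*\Sigma_g$ realizes the case $d=1$ (Remark~\ref{rem:Xg-sublattice}), it suffices to prove that $d=1$ for every Stein filling $W$.

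To do this I would first reinterpret $d$ group-theoretically. The inclusion $i\colon Y_g\hookrightarrow W$ induces a surjection $i_*\colon\pi_1(Y_g)\to\pi_1(W)$ because $W$ has only $0$-, $1$-, and $2$-handles, and abelianizing yields a surjection $H_1(Y_g)\to H_1(W)$ which coincides (up to sign) with the map appearing in the Mayer-Vietoris sequence for $Z=W\cup_{Y_g}X_g$ used in the proof of Theorem~\ref{thm:exact-homology}; its kernel is the subgroup $\img(\delta)$, which we showed there is contained in the torsion summand $\Z/(2g-2)$ of $H_1(Y_g)$. Writing $H_1(Y_g)=F\oplus T$, where $F=\langle\bar a_1,\dots,\bar a_g,\bar b_1,\dots,\bar b_g\rangle\cong\Z^{2g}$ and $T=\langle\bar t\rangle\cong\Z/(2g-2)$ is the torsion generated by the fiber class, we obtain a canonical splitting
\[ H_1(W)\;\cong\;H_1(Y_g)/\img(\delta)\;\cong\;F\;\oplus\;\bigl(T/\img(\delta)\bigr), \]
in which $T/\img(\delta)\cong\Z/d\Z$ is generated by the image of the fiber class and $F$ is carried isomorphically onto the free summand by the classes of $\alpha_1,\dots,\alpha_g,\beta_1,\dots,\beta_g$.

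Now I would invoke Proposition~\ref{prop:stein-meridian}: it asserts that $\pi_1(W)$ is generated by $\alpha_1,\dots,\alpha_g,\beta_1,\dots,\beta_g$ alone, hence $H_1(W)$ is generated by the classes of these elements, which by the previous step span precisely the summand $F$. Since $H_1(W)=F\oplus\bigl(T/\img(\delta)\bigr)$, this forces $T/\img(\delta)=0$, i.e.\ $d=1$. Feeding $d=1$ back into Theorem~\ref{thm:exact-homology} gives $H_1(W)\cong\Z^{2g}$, $H_2(W)\cong\Z$ with intersection form $\langle 2g-2\rangle$, and $H_3(W)=0$; together with $H_0(W)\cong\Z$ and $H_4(W)=0$ (as $\partial W\neq\varnothing$) this matches the integral homology and intersection form of $DT^*\Sigma_g$.

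The substantive geometric input --- constructing the meridian-bounding disk so that the cap is simply connected, and the covering-space Euler characteristic estimate that pins the index of $i_*(H)$ in $\pi_1(W)$ down to $1$ --- has already been carried out in Theorem~\ref{thm:K3-lagrangian} and Proposition~\ref{prop:stein-meridian}, so for the present theorem I expect the only point requiring care to be the bookkeeping of the second paragraph: verifying that the abstract $\Z^{2g}$ summand of $H_1(W)$ produced by Theorem~\ref{thm:exact-homology} is genuinely the image of $F=\langle a_1,\dots,b_g\rangle$, so that the generation statement of Proposition~\ref{prop:stein-meridian} applies to it verbatim and the torsion is squeezed out.
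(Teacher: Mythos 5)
Your proposal is correct and takes essentially the same route as the paper: both reduce the statement to showing $d=1$ in Theorem~\ref{thm:exact-homology} and then use Proposition~\ref{prop:stein-meridian} to conclude that $H_1(W)$ is generated by the $2g$ classes of $\alpha_1,\dots,\alpha_g,\beta_1,\dots,\beta_g$. The paper finishes slightly more quickly by observing that $\Z^{2g}\oplus\Z/d\Z$ with $d>1$ cannot be generated by fewer than $2g+1$ elements, whereas you track explicitly that these classes span the free summand of $H_1(W)\cong H_1(Y_g)/\img(\delta)$ so the torsion quotient must vanish; both closing steps are valid.
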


\begin{proof}
In light of Theorem~\ref{thm:exact-homology} we know that $H_1(W) \cong \Z^{2g} \oplus \Z/d\Z$ for some $d \geq 1$, and that it suffices to show that $d=1$.  Now according to Proposition \ref{prop:stein-meridian}, the fundamental group $\pi_1(W)$ is generated by the $2g$ elements $\alpha_1,\dots,\alpha_g,\beta_1,\dots,\beta_g$, hence its abelianization $H_1(W)$ is also generated by the corresponding homology classes.  However, if $d>1$ then any presentation of $\Z^{2g} \oplus \Z/d\Z$ requires at least $2g+1$ generators, so we must have $d=1$.
\end{proof}

Theorem~\ref{thm:homology-stein} tells us the first group homology of $\pi_1(W)$, since $H_1(\pi_1(W);\Z) = H_1(W;\Z)$.  The second homology of $\pi_1(W)$ will also be useful later:

\begin{proposition} \label{prop:stein-hurewicz}
If $(W,J)$ is a Stein filling of $(Y_g,\xi_g)$, then $H_2(\pi_1(W); \Z) \cong \Z$.
\end{proposition}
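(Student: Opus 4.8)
The plan is to combine Hopf's formula with a deficiency estimate coming from the Stein handle decomposition, so as to pin $H_2(\pi_1(W);\Z)$ down to ``$\Z$ or finite cyclic,'' and then to rule out the finite case. Since $W$ is compact and $\pi_1(W)$ is a quotient of the finitely presented group $\pi_1(Y_g)$, it is finitely presented, and Hopf's theorem gives an exact sequence
\[ \pi_2(W) \xrightarrow{\ h\ } H_2(W;\Z) \longrightarrow H_2(\pi_1(W);\Z) \longrightarrow 0, \]
with $h$ the Hurewicz homomorphism. By Theorem~\ref{thm:homology-stein} we have $H_2(W;\Z)\cong\Z$, so $H_2(\pi_1(W);\Z)$ is cyclic, and it is $\cong\Z$ exactly when $h=0$; otherwise it is finite. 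Thus the statement amounts to showing that no nonzero class in $H_2(W;\Z)$ is spherical.

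To bound the rank I would use that a Stein structure on $W$ furnishes a handle decomposition with handles of index at most $2$, hence a presentation of $\pi_1(W)$ with, say, $h_1$ generators and $h_2$ relators; since $\chi(W)=1-h_1+h_2$ and $\chi(W)=2-2g$ by Corollary~\ref{cor:exact-betti}, this presentation has deficiency $h_1-h_2=2g-1$. The standard inequality $\deficiency(G)\le b_1(G)-b_2(G)$ (obtained by comparing the presentation $2$-complex with $B\pi_1(W)$ via Hopf's theorem) together with $b_1(\pi_1(W))=\rank H_1(W)=2g$ from Theorem~\ref{thm:homology-stein} then forces $b_2(\pi_1(W))\le 1$. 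Combined with the previous paragraph, $H_2(\pi_1(W);\Z)$ is either $\Z$ or a finite cyclic group.

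The remaining --- and main --- step is to exclude the finite case, that is, to prove $b_2(\pi_1(W))\ge 1$, equivalently $H^2(\pi_1(W);\Q)\ne 0$, equivalently (since inflation $H^2(\pi_1(W);\Q)\to H^2(W;\Q)$ is injective and the cup product of degree-one classes factors through $H^2(\pi_1(W))$) that the cup product $H^1(W;\Q)\otimes H^1(W;\Q)\to H^2(W;\Q)\cong\Q$ is nonzero; equivalently again, that the Hurewicz image $h(\pi_2(W))\subseteq H_2(W)\cong\Z$ vanishes. Here I would invoke the Calabi--Yau cap: gluing $X_g$ to $W$ yields a closed $4$-manifold $Z$ homeomorphic to a K3 surface (Remark~\ref{rem:simply-connected}), in which $H_2(W)\cong\Z$ embeds with generator $\sigma$ of square $\sigma\cdot\sigma=2g-2>0$, which by Remark~\ref{rem:Xg-sublattice} is (up to sign) the class of the Lagrangian $\Sigma_g$. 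Since $Z$ has $b_2^+=3>1$ and $K_Z=0$, the adjunction inequality shows that no nonzero multiple of $\sigma$ can be carried by an \emph{embedded} sphere in $Z$, hence none in $W$; so any surface in $W$ representing $\sigma$ has genus at least $g\ge 2$, and one would then argue that $H^1(W;\Q)$ restricts to a non-isotropic subspace of its $H^1$, giving $a\cup b\ne0$ for suitable $a,b\in H^1(W;\Q)$. The main obstacle is precisely this last point: the adjunction inequality in $Z$ only directly forbids \emph{embedded} sphere representatives, so to conclude $h=0$ (equivalently, to control the restriction $H^1(W)\to H^1(\Sigma)$) one must rule out immersed or singular spherical representatives of positive square inside an exact filling, leveraging the exactness of $\omega$ --- which makes the symplectic area of every spherical class vanish --- together with the minimality of $(Z,\omega_Z)$ and its symplectic Kodaira dimension zero.
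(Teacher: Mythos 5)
Your reduction via Hopf's formula is exactly the paper's starting point: $H_2(\pi_1(W);\Z)$ is the cokernel of the Hurewicz map $h\colon \pi_2(W)\to H_2(W)\cong\Z$, so the entire content of the proposition is that $h=0$. (The deficiency estimate in your second paragraph adds nothing at that point, since a quotient of $\Z$ is automatically cyclic.) The problem is that you never establish $h=0$. As you acknowledge yourself, the adjunction inequality in the capped-off homology K3 only obstructs \emph{embedded} spheres of positive square, whereas the image of the Hurewicz map is generated by arbitrary continuous (equivalently, immersed) spheres, on which adjunction says nothing; and exactness of $\omega$ does not rescue the argument, since it makes the symplectic area of \emph{every} class in $H_2(W)$ vanish, spherical or not, so it provides no way to single out spherical classes. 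The final step, that $H^1(W;\Q)$ restricts non-isotropically to the $H^1$ of a genus-$\geq 2$ representative, is likewise asserted rather than proved. So the proposal correctly identifies the obstruction but does not overcome it: this is a genuine gap at the main step.

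The paper closes this gap by pure group theory rather than gauge theory, and you may find the contrast instructive. Suppose $h\neq 0$, so that $H_2(\pi_1(W);\Q)=0$. By Proposition~\ref{prop:stein-meridian} the elements $\alpha_1,\dots,\alpha_g,\beta_1,\dots,\beta_g$ generate $\pi_1(W)$, and their images in $H_1(W;\Z)\cong\Z^{2g}$ are linearly independent over $\Q$; Stallings' theorem \cite[Theorem~7.4]{stallings-homology} then implies that these $2g$ elements freely generate a free subgroup, hence $\pi_1(W)\cong F_{2g}$. But $\prod_{j}[\alpha_j,\beta_j]=\tau^{2g-2}$ is central, and free groups have trivial center, so this product of commutators is trivial and $\pi_1(W)$ is a proper quotient of $F_{2g}$ on those generators --- impossible, since finitely generated free groups are Hopfian. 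To salvage your topological route you would need a tool that constrains singular spherical representatives of positive-square classes, which is substantially harder than the embedded adjunction inequality.
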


\begin{proof}
Let $\pi = \pi_1(W)$, and recall that $H_2(W) \cong \Z$.  The group homology $H_2(\pi;\Z) = H_2(K(\pi,1);\Z)$ is classically known to be isomorphic to the cokernel of the Hurewicz map
\[ h: \pi_2(W) \to H_2(W), \] 
which is $\Z / \img(h)$, so $H_2(\pi;\Z)$ is $\Z$ if the Hurewicz map is zero and finite otherwise.  The $2g$ elements $\alpha_1,\dots,\alpha_g,\beta_1,\dots,\beta_g$ generate $\pi$ by Proposition~\ref{prop:stein-meridian}, so their images generate $H_1(W;\Z) = \Z^{2g}$ and are thus linearly independent over $\Q$.

Supposing that $h$ is nonzero, we have $H_2(\pi;\Q) = 0$.  According to Stallings \cite[Theorem~7.4]{stallings-homology}, the linear independence of the $\alpha_i$ and $\beta_i$ in $H_1(\pi)$ and the vanishing of $H_2(\pi;\Q)$ guarantee that $\alpha_1,\dots,\alpha_g,\beta_1,\dots,\beta_g$ form a basis of a free subgroup of $\pi$, and we conclude that $\pi$ is the free group $F_{2g}$.  The element $\prod_{j=1}^g [\alpha_j,\beta_j]$ of $\pi$ is central, since it equals $\tau^{2g-2}$ and $\tau$ is central; but free groups have trivial center, so $\prod_{j=1}^g [\alpha_j,\beta_j] = 1$ and thus $\pi$ is a nontrivial quotient of $F_{2g}$.  Since finitely generated free groups are Hopfian, a nontrivial quotient of $F_{2g}$ cannot be isomorphic to $F_{2g}$, and we have a contradiction.
\end{proof}

Since the class $[t]$ of the circle fiber generates the torsion summand of $H_1(Y_g)$, and Theorem~\ref{thm:homology-stein} says that $H_1(W)$ is torsion-free, we see that $[\tau]=0$ in $H_1(W)$.  Thus $\tau$ lies in the commutator subgroup of $\pi_1(W)$.  In Section~\ref{ssec:pi_1} we will see that in fact $\tau=1$ in $\pi_1(W)$.

\subsection{The fundamental group of a Stein filling}
\label{ssec:pi_1}

Let $(W,J)$ denote a Stein filling of $(Y_g,\xi_g)$ as usual.  Our goal in this section is to explicitly determine its fundamental group:

\begin{theorem} \label{thm:surface-group}
The fundamental group $\pi_1(W)$ is isomorphic to $\pi_1(\Sigma_g)$.
\end{theorem}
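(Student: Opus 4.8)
The plan is to combine the group-homological data already established with the structural consequences of the central element $\tau$. We know from Theorem~\ref{thm:homology-stein} that $H_1(\pi_1(W);\Z) \cong \Z^{2g}$, from Proposition~\ref{prop:stein-hurewicz} that $H_2(\pi_1(W);\Z) \cong \Z$, that $\pi = \pi_1(W)$ is generated by $\alpha_1,\dots,\alpha_g,\beta_1,\dots,\beta_g$, and that the product of commutators $\prod_j[\alpha_j,\beta_j]$ equals $\tau^{2g-2}$ with $\tau$ central and $[\tau]=0$ in $H_1(W)$. First I would show $\tau=1$ in $\pi$; this is the key reduction. Assume not. Then $\tau$ generates an infinite cyclic or finite cyclic central subgroup $C = \langle\tau\rangle$, and $\pi/C$ is a quotient of the surface group $\pi_1(\Sigma_g)$ (since killing $\tau$ imposes the relation $\prod_j[\alpha_j,\beta_j]=1$ on the generators). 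The point is that $\pi$ is then a central extension
\[ 1 \to C \to \pi \to \pi/C \to 1 \]
with nontrivial $C$. I would invoke the five-term exact sequence in group homology (or the relevant portion of the Lyndon--Hochschild--Serre spectral sequence) for this extension, together with the fact that $H_1(\pi)$ is torsion-free of rank $2g$ while $H_1(\pi/C)$ is a quotient of $\Z^{2g}$, to force $C$ to inject into $H_2(\pi/C)$ up to the image of $H_2(\pi)$. This is where the numerology must be made to collide: $H_2(\pi)\cong\Z$ is too small to accommodate a nontrivial central kernel given the surjection onto $H_2(\pi/C)$ and the rank constraints, so $C=1$.

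Once $\tau=1$, the relation $\prod_{j=1}^g[\alpha_j,\beta_j]=\tau^{2g-2}=1$ holds in $\pi$, so the surjection $\pi_1(\Sigma_g) \twoheadrightarrow \pi_1(W)$ sending the standard generators to $\alpha_j,\beta_j$ is well-defined (and surjective by Proposition~\ref{prop:stein-meridian}). It remains to show this map is injective. For this I would use the acyclicity/Hopficity-type argument: a surjection of surface groups $\pi_1(\Sigma_g)\twoheadrightarrow\pi$ which is not an isomorphism would have to factor through a proper quotient, but the induced map on $H_1$ is an isomorphism $\Z^{2g}\to\Z^{2g}$ (again by Theorem~\ref{thm:homology-stein}), hence the map on $H_2$ is forced: a degree-one map in the sense of fundamental classes, and comparing $H_2(\pi_1(\Sigma_g))\cong\Z$ with $H_2(\pi)\cong\Z$ (Proposition~\ref{prop:stein-hurewicz}) shows the surjection induces an isomorphism on all of $H_*$. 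Since $\pi_1(\Sigma_g)$ is residually finite and Hopfian, and more to the point an $H_1$- and $H_2$-isomorphism from a surface group to a finitely generated group with the same $H_1,H_2$ must be an isomorphism (one can cite Stallings' theorem as in the proof of Proposition~\ref{prop:stein-hurewicz}, or note that surface groups are cohopfian among their own quotients), the kernel is trivial.

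The main obstacle will be the first step, ruling out $\tau\neq 1$. The difficulty is that a priori $\langle\tau\rangle$ could be infinite cyclic, so one cannot simply count orders of torsion; one genuinely needs the homological bound $H_2(\pi)\cong\Z$ from Proposition~\ref{prop:stein-hurewicz} and the precise value $H_1(\pi)\cong\Z^{2g}$ from Theorem~\ref{thm:homology-stein}, fed into the five-term sequence
\[ H_2(\pi) \to H_2(\pi/C) \to C \to H_1(\pi) \to H_1(\pi/C) \to 0 \]
(valid since $C$ is central, hence $C_{\pi/C}\otimes C = C$). One must show $H_1(\pi)\to H_1(\pi/C)$ is injective — equivalently $C\to H_1(\pi)$ is zero — using that $\tau$ already dies in $H_1(W)$; then $C$ embeds in the cokernel of $H_2(\pi)\to H_2(\pi/C)$. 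Bounding $H_2(\pi/C)$ requires knowing $\pi/C$ is a quotient of $\pi_1(\Sigma_g)$ and analyzing which quotients are possible — and this is exactly the point at which, per the acknowledgments, Agol's observation about the RFRS condition and Proposition~\ref{prop:central-extension} (referenced earlier in the paper) is expected to enter, controlling the virtual structure of $\pi$ so that the central extension must split off trivially. I would structure the argument to reduce cleanly to whatever statement Proposition~\ref{prop:central-extension} provides about central extensions of surface-group quotients.
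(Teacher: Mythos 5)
Your overall architecture matches the paper's: defer the hard statement that $\pi_1(W)/\langle\tau\rangle \cong \pi_1(\Sigma_g)$ (not merely a quotient of it) to Proposition~\ref{prop:central-extension}, whose proof via RFRS and towers of cyclic covers you correctly identify as the place where Agol's input enters, and then kill $\tau$ by playing the central extension off against $H_1(\pi)\cong\Z^{2g}$ and $H_2(\pi)\cong\Z$. The gap is in the mechanism you propose for that last step. The five-term exact sequence
\[ H_2(\pi) \to H_2(\pi_1(\Sigma_g)) \to C \to H_1(\pi) \to H_1(\pi_1(\Sigma_g)) \to 0 \]
does not produce the collision you want. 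You correctly get that $C \to H_1(\pi)$ is zero (the last map is a surjection $\Z^{2g}\to\Z^{2g}$, hence injective), so $C$ is the cokernel of $H_2(\pi)\to H_2(\pi_1(\Sigma_g))$, i.e.\ of a map $\Z\to\Z$. But the cokernel of a map $\Z\to\Z$ can be \emph{any} finite cyclic group; nothing about ``$H_2(\pi)\cong\Z$ being too small'' rules out $C\cong\Z/m$ for $m>1$. Indeed, for the central extension of $\pi_1(\Sigma_g)$ by $\Z/n$ with extension class a generator of $H^2(\Sigma_g;\Z/n)$, the five-term sequence is perfectly consistent and even gives $H_1 \cong \Z^{2g}$ torsion-free; the five-term sequence alone cannot distinguish this group from $\pi_1(\Sigma_g)$.

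What actually forces $n=1$ is a term the five-term sequence does not see: in the Lyndon--Hochschild--Serre spectral sequence of the central extension, $E^2_{1,1} = H_1(\pi_1(\Sigma_g); H_1(\Z/n\Z)) \cong (\Z/n\Z)^{2g}$ receives and emits no differentials (the relevant sources and targets vanish because $\pi_1(\Sigma_g)$ has cohomological dimension $2$), so $(\Z/n\Z)^{2g}$ survives as an associated graded piece of the filtration on $H_2(\pi)\cong\Z$. A subquotient of $\Z$ is cyclic, and $(\Z/n\Z)^{2g}$ with $g\geq 2$ is cyclic only if $n=1$ (the case $C\cong\Z$ gives $\Z^{2g}$ and is likewise excluded). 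This is the paper's argument, and it is where Proposition~\ref{prop:stein-hurewicz} genuinely gets used. Your closing Hopficity paragraph is then unnecessary: once $\tau=1$, Proposition~\ref{prop:central-extension} already says $\pi_1(\Sigma_g)\to\pi_1(W)/\langle\tau\rangle=\pi_1(W)$ is an isomorphism; and as a freestanding claim, ``a surjection from a surface group inducing isomorphisms on $H_1$ and $H_2$ is an isomorphism'' is not something you can cite from Stallings or from Hopficity without further argument.
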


Our strategy will be to first show that $\pi_1(W)$ must be an extension of $\pi_1(\Sigma_g)$ by a cyclic group and then use what we know about its group homology to show that this cyclic group is in fact trivial.

Summarizing what we know so far about $\pi_1(W)$, we have seen that it is a quotient of
\[ \left\langle \alpha_1,\dots,\alpha_g,\beta_1,\dots,\beta_g,\tau \mathrel{}\middle|\mathrel{} \prod_{i=1}^g [\alpha_i,\beta_i] = \tau^{2g-2}, [\alpha_i,\tau]=[\beta_i,\tau] = 1 \right\rangle, \]
where the central element $\tau$ is the image of a circle fiber $t \in \pi_1(Y_g)$.  Moreover, $H_1(W;\Z) = \Z^{2g}$ is generated by the elements $\alpha_i$ and $\beta_i$, and the central element $\tau$ belongs to the commutator subgroup of $\pi_1(W)$.  Thus there is a surjection
\[ p: \pi_1(\Sigma_g) = \left\langle \alpha_1,\dots,\alpha_g,\beta_1,\dots,\beta_g \mathrel{}\middle|\mathrel{} \prod_{i=1}^g [\alpha_i,\beta_i] = 1 \right\rangle \to \pi_1(W)/\langle\tau\rangle \]
through which the abelianization map $\ab: \pi_1(\Sigma_g) \to \Z^{2g}$ factors as
\[ \pi_1(\Sigma_g) \xrightarrow{p} \pi_1(W)/\langle\tau\rangle \xrightarrow{\ab} \Z^{2g}. \]
Such a factorization exists for any surjection $p: \pi_1(\Sigma_g) \to \pi_1(W)/\langle\tau\rangle$: since $\pi_1(\Sigma_g) \xrightarrow{\ab\circ p} \Z^{2g}$ is a map to an abelian group, it factors as $\pi_1(\Sigma_g) \xrightarrow{\ab} \Z^{2g} \xrightarrow{\psi} \Z^{2g}$, and $\psi$ is a surjection $\Z^{2g} \to \Z^{2g}$ since $\ab\circ p$ is onto, so it is an isomorphism and then $\ab: \pi_1(\Sigma_g) \to \Z^{2g}$ is equal to $(\psi^{-1}\circ\ab)\circ p$.

If we fix a surjection $\varphi: \Z^{2g} \to \Z/n\Z$ for some $n > 1$, then we have a collection of surjective maps of the form
\[ \xymatrix@!R=5pt{
\pi_1(W) \ar[dr] & & & \\
& \pi_1(W)/\langle\tau\rangle \ar[r]^-{\ab} & \Z^{2g} \ar[r]^-{\varphi} & \Z/n\Z \\
\pi_1(\Sigma_g) \ar[ur]_p & & &
} \]
and the kernels of the maps $\pi_1(W) \to \Z/n\Z$ and $\pi_1(\Sigma_g) \to \Z/n\Z$ define normal, $n$-fold cyclic covers $W'$ and $\Sigma_{g'}$ of $W$ and $\Sigma_g$ respectively, where $2-2g' = n(2-2g)$.

\begin{definition}
Let $(W,J)$ be a Stein filling of $(Y_g,\xi_g)$, and let $p: \pi_1(\Sigma_g) \to \pi_1(W)/\langle\tau\rangle$ be a surjection.  If $\Sigma_{g'} \to \Sigma_g$ and $W' \to W$ are the finite cyclic covers produced by the above construction, then we will say that $(\Sigma_{g'}, W')$ is \emph{induced by $(p,\varphi)$}.
\end{definition}

Since $W'$ is a finite cover of a Stein manifold it has a natural Stein structure $J'$, so its boundary $(Y',\xi')$ is connected, and as in the proof of Proposition~\ref{prop:stein-meridian} it follows that $Y'$ is a normal, $n$-fold cyclic cover of $Y_g$.

\begin{lemma} \label{lem:canonical-cyclic-cover}
If $(\Sigma_{g'},W')$ is induced by $(p,\varphi)$, then $(W',J')$ is a Stein filling of the canonical contact structure $(Y',\xi') = (Y_{g'},\xi_{g'})$ on the unit cotangent bundle of $\Sigma_{g'}$.
\end{lemma}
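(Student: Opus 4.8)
The plan is to compute the covering $Y' \to Y_g$ explicitly, recognize it as the pullback of a finite covering of the base surface $\Sigma_g$, and then identify the result with the canonical contact structure on the unit cotangent bundle of that cover. Recall that $W'$ is already known to be a Stein domain whose boundary $Y' = \partial W'$ is connected, and that the contact structure $\xi'$ induced by $J'$ is exactly the pullback of $\xi_g$ along the covering $Y' \to Y_g$; so the only real content is to show $(Y',\xi') \cong (Y_{g'},\xi_{g'})$.

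First I would observe that $Y' \to Y_g$ is the covering classified by the subgroup $K = \ker\!\big(\pi_1(Y_g) \xrightarrow{i_*} \pi_1(W) \to \Z/n\Z\big) \leq \pi_1(Y_g)$, where $\pi_1(W) \to \Z/n\Z$ is the homomorphism used to build $W' \to W$. By construction this homomorphism factors through $\pi_1(W)/\langle\tau\rangle$, so it sends $\tau = i_*(t)$ to $0$; hence the fiber class $t$ lies in $K$. Writing $1 \to \langle t\rangle \to \pi_1(Y_g) \xrightarrow{q} \pi_1(\Sigma_g) \to 1$ for the central extension coming from the circle bundle $\pi \colon Y_g \to \Sigma_g$, the inclusion $\langle t\rangle \subseteq K$ forces $K = q^{-1}(\bar K)$ with $\bar K = q(K)$. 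Consequently $Y' \to Y_g$ is the pullback, along $\pi$, of the connected cyclic covering $f \colon \Sigma'' \to \Sigma_g$ corresponding to $\bar K$; that is, $Y'$ is the total space of the pulled-back circle bundle $f^*(ST^*\Sigma_g)$. Since $\chi(\Sigma'') = n\,\chi(\Sigma_g) = 2-2g'$, the surface $\Sigma''$ is diffeomorphic to $\Sigma_{g'}$.

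Next I would check that $f^*(ST^*\Sigma_g, \xi_{\can}) \cong (ST^*\Sigma'', \xi_{\can})$. Because $f$ is a covering map it is a local diffeomorphism, so its cotangent lift is a bundle isomorphism $T^*\Sigma'' \cong f^*T^*\Sigma_g$ over $f$; giving $\Sigma''$ the pulled-back metric, this identifies $ST^*\Sigma''$ with the pulled-back sphere bundle $f^*(ST^*\Sigma_g) = Y'$. Moreover the tautological $1$-form on a cotangent bundle is natural under cotangent lifts of local diffeomorphisms, so under this identification the canonical contact form on $ST^*\Sigma''$ is the restriction of the pullback of $\alpha = \sum_i p_i\, dq_i$, whence $\xi_{\can}$ on $ST^*\Sigma''$ corresponds to $\xi'$. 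Combining this with $\Sigma'' \cong \Sigma_{g'}$ gives $(Y',\xi') \cong (ST^*\Sigma'', \xi_{\can}) \cong (Y_{g'},\xi_{g'})$, and since $W'$ is a Stein domain with $\partial(W',J') = (Y',\xi')$ this shows $(W',J')$ is a Stein filling of $(Y_{g'},\xi_{g'})$.

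I expect the bulk of this to be a routine unwinding of definitions; the step deserving the most care is the first one, namely verifying that the covering $Y' \to Y_g$ genuinely \emph{splits} over each circle fiber of $ST^*\Sigma_g \to \Sigma_g$. This rests on the fiber class $t$ lying in $K$, which is precisely where we use that the homomorphism defining $W'$ was built to kill $\tau$ (possible because $[\tau] = 0$ in $H_1(W)$, by Theorem~\ref{thm:homology-stein}). Once that is in hand, the identification of the pulled-back bundle with $ST^*\Sigma''$ and of the pulled-back plane field with $\xi_{\can}$ are standard naturality properties of cotangent bundles and their tautological forms.
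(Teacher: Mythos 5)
Your argument is correct, and the two halves compare to the paper's proof as follows. For the identification of the underlying manifold, both proofs hinge on the same key point — the fiber class $t$ lies in the kernel of $\pi_1(Y_g)\to\Z/n\Z$ because the defining homomorphism kills $\tau$ — and both then compute $\chi$ (equivalently, the Euler number) to pin down the cover; the paper phrases this via the lifted $S^1$-action and its orbit space, while you phrase it as the statement $K=q^{-1}(\bar K)$ and realize $Y'$ as the pullback bundle $f^*(ST^*\Sigma_g)$, which is essentially the same argument in different clothing (just make sure to note that the pullback is connected, which follows since $K$ already has index $n$). Where you genuinely diverge is in identifying the contact structure: the paper observes that $\xi'$ has Legendrian fibers and invokes Giroux's classification of contact structures on circle bundles with Legendrian fibers, whereas you identify $f^*(ST^*\Sigma_g)$ with $ST^*\Sigma_{g'}$ via the cotangent lift of the covering map and use the naturality of the tautological one-form under local diffeomorphisms. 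Your route is more elementary and self-contained — it produces an explicit contactomorphism and avoids the external classification result — at the cost of having to set up the pulled-back metric and check the naturality computation; the paper's route is shorter on the page but leans on a harder theorem. Both are valid proofs of the lemma.
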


\begin{proof}
The circle fiber $t \in \pi_1(Y_g)$ is in the kernel of $\pi_1(Y_g) \xrightarrow{i_*} \pi_1(W) \to \Z/n\Z$ since it maps to $\tau \in \pi_1(W)$, so it lifts to a closed curve in $Y'$.  Its preimage in $Y'$ therefore consists of $n$ disjoint circles, so the orbit space $Y'/S^1$ is an $n$-fold cover of $\Sigma_g$, hence $Y'$ is a circle bundle over $\Sigma_{g'}$.  The Euler class of $Y'\to\Sigma_{g'}$ is then $n$ times the Euler class of $Y_g\to\Sigma_g$, namely $-n\chi(\Sigma_g) = -\chi(\Sigma_{g'})$, so in fact $Y'$ is the unit cotangent bundle $Y_{g'}$ of $\Sigma_{g'}$.

Since the contact structure $\xi_g$ is tangent to the fibers of $Y_g \to \Sigma_g$, its cover $\xi'$ is likewise tangent to the fibers of $Y_{g'} \to \Sigma_{g'}$, and the only contact structure on the unit cotangent bundle of $\Sigma_{g'}$ with Legendrian fibers is the canonical one \cite[Proposition~3.3]{giroux-circle-bundles} (cf.\ also \cite{lutz}).  Thus $(Y',\xi') = (Y_{g'}, \xi_{g'})$, and so $(W',J')$ is a Stein filling of $(Y_{g'},\xi_{g'})$.
\end{proof}

\begin{proposition} \label{prop:cover-surjection}
Suppose that $(\Sigma_{g'},W')$ is induced by $(p,\varphi)$.  Identifying $\pi_1(\Sigma_{g'})$ as a subgroup of $\pi_1(\Sigma_g)$, the map $p$ induces a surjection
\[ p': \pi_1(\Sigma_{g'}) \to \pi_1(W')/\langle\tau'\rangle \]
such that $\ker(p') = \ker(p)$.
\end{proposition}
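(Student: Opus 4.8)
The plan is to reduce the statement to bookkeeping with covering-space subgroups, once $\pi_1(W')/\langle\tau'\rangle$ is correctly identified as a subgroup of $\pi_1(W)/\langle\tau\rangle$. Write $\pi=\pi_1(W)$, $\bar\pi=\pi/\langle\tau\rangle$, and let $\mathrm{pr}\colon\pi\to\bar\pi$ be the quotient map. Using the maps in the diagram preceding the statement, set $\bar q=\varphi\circ\ab\colon\bar\pi\to\Z/n\Z$; then, as subgroups, $\pi_1(W')=\ker(q_W)$ with $q_W:=\bar q\circ\mathrm{pr}$, and $\pi_1(\Sigma_{g'})=\ker(q_\Sigma)$ with $q_\Sigma:=\bar q\circ p$. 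Since $q_W(\tau)=\bar q(\mathrm{pr}(\tau))=0$, we have $\langle\tau\rangle\subseteq\ker(q_W)=\pi_1(W')$, and $\langle\tau\rangle$ is normal in $\pi$ because $\tau$ is central.

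First I would pin down $\tau'$. By Lemma~\ref{lem:canonical-cyclic-cover} the boundary of $W'$ is $(Y_{g'},\xi_{g'})$, and, as in the proof of that lemma, the circle fiber $t\in\pi_1(Y_g)$ — which maps to $\tau$ in $\pi_1(W)$, hence into $\ker(q_W)$ — lifts to a circle fiber $t'$ of $Y_{g'}\to\Sigma_{g'}$ with $t'\mapsto t$ under $\pi_1(Y_{g'})\hookrightarrow\pi_1(Y_g)$. Its image $\tau'\in\pi_1(W')$ is central (the inclusion $Y_{g'}\hookrightarrow W'$ induces a surjection on $\pi_1$ and $t'$ is central), and by commutativity of the square relating the inclusions $Y_{g'}\hookrightarrow W'$ and $Y_g\hookrightarrow W$ it maps to $\tau$ under the injection $\pi_1(W')\hookrightarrow\pi_1(W)$. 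Thus the image of $\langle\tau'\rangle$ under this injection is exactly $\langle\tau\rangle$, so $\pi_1(W')/\langle\tau'\rangle$ is canonically identified with $\ker(q_W)/\langle\tau\rangle$; and since $q_W=\bar q\circ\mathrm{pr}$ with $\ker\mathrm{pr}=\langle\tau\rangle$, the map $\mathrm{pr}$ descends to an isomorphism $\ker(q_W)/\langle\tau\rangle\xrightarrow{\sim}\ker(\bar q)\subseteq\bar\pi$. Chasing through, the covering-induced injection $\pi_1(W')\hookrightarrow\pi_1(W)$ descends to an inclusion $\pi_1(W')/\langle\tau'\rangle\hookrightarrow\pi_1(W)/\langle\tau\rangle$ with image $\ker(\bar q)$.

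With this identification in place, $p'$ is just the restriction of $p$: if $x\in\pi_1(\Sigma_{g'})=\ker(\bar q\circ p)$ then $\bar q(p(x))=0$, so $p(x)\in\ker(\bar q)=\pi_1(W')/\langle\tau'\rangle$, and this defines $p'\colon\pi_1(\Sigma_{g'})\to\pi_1(W')/\langle\tau'\rangle$. For surjectivity, given $y\in\ker(\bar q)$ use surjectivity of $p$ to choose $x\in\pi_1(\Sigma_g)$ with $p(x)=y$; then $q_\Sigma(x)=\bar q(y)=0$, so $x\in\pi_1(\Sigma_{g'})$ and $p'(x)=y$. For the kernel, $p(x)=1$ forces $\bar q(p(x))=0$, so $\ker(p)\subseteq\ker(\bar q\circ p)=\pi_1(\Sigma_{g'})$, and hence $\ker(p')=\ker(p)\cap\pi_1(\Sigma_{g'})=\ker(p)$.

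The entire argument is a diagram chase, and the one step that genuinely requires care — the crux — is the identification in the second paragraph of $\pi_1(W')/\langle\tau'\rangle$ with $\ker(\bar q)$: this depends on Lemma~\ref{lem:canonical-cyclic-cover} to make sense of $\tau'$ and on checking that $\langle\tau'\rangle$ maps onto $\langle\tau\rangle$ under the covering-induced injection of fundamental groups, which is exactly what forces the restriction of $p$ to land where the statement claims.
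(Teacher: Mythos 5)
Your proof is correct and follows essentially the same route as the paper's: pin down $\tau'$ as mapping to $\tau$ under the covering-induced injection, identify $\pi_1(W')/\langle\tau'\rangle$ with $\ker(\varphi\circ\ab)\subseteq\pi_1(W)/\langle\tau\rangle$, and then restrict $p$. The only (harmless) difference is that you obtain this identification directly from the isomorphism theorem applied to $q_W=\bar q\circ\mathrm{pr}$, whereas the paper gets it by observing that $\pi_1(W')/\langle\tau'\rangle$ and $\ker(\varphi\circ\ab)$ both have index $n$ in $\pi_1(W)/\langle\tau\rangle$ and one contains the other.
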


\begin{proof}
It is clear that $\langle \tau \rangle \subset \pi_1(W')$, viewing the latter as a subgroup of $\pi_1(W)$, since $\tau$ is in the kernel of $\pi_1(W) \to \Z/n\Z$.  Moreover, if $\tau' \in \pi_1(W')$ denotes the image of the circle fiber $t' \in \pi_1(Y_{g'})$, then since $t'$ projects to the circle fiber $t \in \pi_1(Y_g)$, the covering map $W' \to W$ sends $\tau'$ to $\tau$, so we have
\[ \langle \tau \rangle \cap \pi_1(W') = \langle \tau' \rangle. \]
Thus the kernel of $\pi_1(W') \hookrightarrow \pi_1(W) \to \pi_1(W)/\langle \tau \rangle$ is $\langle \tau'\rangle$, inducing an injective map
\[ \frac{\pi_1(W')}{\langle \tau' \rangle} \hookrightarrow \frac{\pi_1(W)}{\langle \tau \rangle}, \]
and it follows that $\pi_1(W')/\langle \tau' \rangle$ has index $n$ in $\pi_1(W)/\langle \tau\rangle$.  Since $\pi_1(W')$ is by definition the kernel of the map $\pi_1(W) \to \Z/n\Z$, the group $\pi_1(W')/\langle\tau'\rangle$ sits in the kernel of the surjective $\pi_1(W)/\langle\tau\rangle \xrightarrow{\varphi\circ\ab} \Z/n\Z$, and this kernel has index $n$ so we conclude that
\[ \pi_1(W')/\langle\tau'\rangle = \ker( \varphi\circ\ab: \pi_1(W)/\langle\tau\rangle \to \Z/n\Z ). \]

Since $\pi_1(\Sigma_{g'})$ is the kernel of $(\varphi \circ \ab) \circ p$, it follows that $p(\pi_1(\Sigma_{g'}))$ lies in $\ker(\varphi\circ \ab)$, and so $p$ restricts to a map
\[ p': \pi_1(\Sigma_{g'}) \to \pi_1(W')/\langle \tau' \rangle, \]
which is easily seen to be surjective just as $p$ is.  Finally, since $p'$ is the restriction of $p$ to $\pi_1(\Sigma_{g'}) \subset \pi_1(\Sigma_{g})$ it follows that $\ker(p') = \ker(p) \cap \pi_1(\Sigma_{g'})$.  But $\ker(p) \subset \ker(\varphi\circ\ab\circ p) = \pi_1(\Sigma_{g'})$, and so $\ker(p)=\ker(p')$ as claimed.
\end{proof}

Proposition~\ref{prop:cover-surjection} allows us to characterize $\pi_1(W)$ as a cyclic extension of a surface group:

\begin{proposition}
\label{prop:central-extension}
The fundamental group $\pi_1(W)$ is a central extension of $\pi_1(\Sigma_g)$ by a cyclic group.  More precisely, there is a short exact sequence
\[ 1 \to \langle \tau \rangle \to \pi_1(W) \to \pi_1(\Sigma_g) \to 1 \]
with the image of $\langle \tau \rangle$ being central in $\pi_1(W)$.
\end{proposition}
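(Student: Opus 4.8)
Our plan is to upgrade the surjection $p \colon \pi_1(\Sigma_g) \to \pi_1(W)/\langle\tau\rangle$ constructed above to an \emph{isomorphism}. Granting this for the moment, the quotient map $\pi_1(W) \to \pi_1(W)/\langle\tau\rangle \cong \pi_1(\Sigma_g)$ has kernel exactly $\langle\tau\rangle$, which is a cyclic subgroup of $\pi_1(W)$ and is central there by construction; this is precisely the short exact sequence in the statement. So the entire proof reduces to showing that $\ker(p) = 1$.

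First I would record the two inputs that drive everything. If $(\Sigma_{g'},W')$ is induced by some pair $(p,\varphi)$, then Proposition~\ref{prop:cover-surjection} produces a surjection $p' \colon \pi_1(\Sigma_{g'}) \to \pi_1(W')/\langle\tau'\rangle$, obtained by restricting $p$, with $\ker(p') = \ker(p)$; in particular $\ker(p) \subseteq \pi_1(\Sigma_{g'})$. Moreover, by Lemma~\ref{lem:canonical-cyclic-cover} the cover $W'$ is again a Stein filling of $(Y_{g'},\xi_{g'})$, so Theorem~\ref{thm:homology-stein} applies to it and gives $H_1(W';\Z)\cong\Z^{2g'}$; since the fiber class $\tau'$ lies in the commutator subgroup of $\pi_1(W')$ (as $H_1(W')$ is torsion-free), this identifies $H_1(W';\Z)$ with the abelianization $\Z^{2g'}$ of $\pi_1(\Sigma_{g'})$ compatibly with $p'$, exactly as in the discussion preceding the definition of ``induced by $(p,\varphi)$''. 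Consequently the construction can be iterated: from $(p,\varphi_0)$ we pass to $(p',\varphi_1)$ for any $\Z/n_1$-quotient $\varphi_1$ of $H_1(W';\Z)\cong H_1(\Sigma_{g'};\Z)$, and so on. Running this along a finite tower of cyclic covers $\Sigma_g \leftarrow \Sigma_{g_1} \leftarrow \cdots \leftarrow \Sigma_{g_r}$, in which each step is the cover associated to a cyclic quotient of the first homology of the surface below it, we get $\ker(p) = \ker(p_1) = \cdots = \ker(p_r)$ and hence $\ker(p) \subseteq \pi_1(\Sigma_{g_r})$.

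It therefore suffices to exhibit such finite towers whose top groups $\pi_1(\Sigma_{g_r})$ intersect to the trivial group, and this is exactly what the RFRS property of surface groups provides. By Agol's theorem, $\pi_1(\Sigma_g)$ is RFRS, so there is a cofinal chain of finite-index subgroups $\pi_1(\Sigma_g) = G_0 \geq G_1 \geq \cdots$ with $\bigcap_i G_i = 1$, each $G_{i+1}$ normal in $G_i$ and $G_i/G_{i+1}$ a finite quotient of $H_1(G_i;\Z)/(\text{torsion})$; since a finite cover of a closed surface is again a closed surface, $H_1(G_i;\Z)$ is torsion-free, so $G_i/G_{i+1}$ is simply a finite quotient of $H_1(G_i;\Z)$. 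Splitting each such quotient into its cyclic factors refines this into a cofinal chain in which every step is a cyclic cover of the type handled above, so the previous paragraph gives $\ker(p) \subseteq \bigcap_i G_i = 1$. Thus $p$ is injective, hence an isomorphism, and the proposition follows.

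The main obstacle is precisely the appeal to RFRS: the whole argument hinges on the covers produced by our construction — iterated cyclic covers pulled back from first homology — being numerous enough to separate every nontrivial element of $\pi_1(\Sigma_g)$, and without the RFRS condition there is no reason this should hold. The remaining points are essentially bookkeeping: refining the RFRS chain into cyclic pieces, checking that each piece is of the form ``induced by $(p_i,\varphi_i)$'', and carrying along the identification $H_1(W_i;\Z)\cong H_1(\pi_1(\Sigma_{g_i}))$ furnished by Theorem~\ref{thm:homology-stein} so that an arbitrary cyclic quotient of $H_1(\Sigma_{g_i})$ is a legitimate choice of $\varphi_i$.
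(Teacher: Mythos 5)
Your proof is correct and follows essentially the same route as the paper's: reduce to injectivity of $p$, iterate Lemma~\ref{lem:canonical-cyclic-cover} and Proposition~\ref{prop:cover-surjection} along an RFRS chain for $\pi_1(\Sigma_g)$, and conclude $\ker(p)\subseteq\bigcap_i G_i=\{1\}$. The extra care you take in refining the RFRS quotients into cyclic pieces and in invoking Theorem~\ref{thm:homology-stein} for each cover $W_i$ to justify the next choice of $\varphi_i$ matches what the paper does implicitly.
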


\begin{proof}
It suffices to show that the surjection $p: \pi_1(\Sigma_g) \to \pi_1(W)/\langle\tau\rangle$ is also injective.  Supposing otherwise, let $x$ be a nontrivial element of $\ker(p)$.  Since surface groups are RFRS \cite{agol} (cf.\ also \cite{hempel}), there is a descending chain of subgroups
\[ \pi_1(\Sigma_g) = G_0 \supset G_1 \supset G_2 \supset \dots \]
such that each $G_{i+1}$ is a normal subgroup of $G_i$ with finite cyclic quotient, defined as the kernel of a map which factors through $G_i \to (G_i)^{\ab}$, and $\bigcap_{i=0}^\infty G_i = \{1\}$.  This corresponds to a tower of normal, finite cyclic covers
\[ \dots \to \Sigma_{g_2} \to \Sigma_{g_1} \to \Sigma_{g_0} = \Sigma_g \]
such that $\pi_1(\Sigma_{g_{i+1}}) = \ker\big(\pi_1(\Sigma_{g_i}) \xrightarrow{\ab} \Z^{2g_i} \xrightarrow{\varphi_i} \Z/n_i\Z\big)$ for some $\varphi_i$.  Now by induction, since $\ab: \pi_1(\Sigma_{g}) \to \Z^{2g}$ factors through $p_0 = p: \pi_1(\Sigma_g) \to \pi_1(W_0)/\langle\tau_0\rangle$, with $W_0=W$, we can construct for each $i \geq 0$ a normal cyclic cover $(W_{i+1},J_{i+1})$ of $(W_i,J_i)$ as above, with $(\Sigma_{g_{i+1}},W_{i+1})$ induced by $(p_i,\varphi_i)$.  
By Lemma~\ref{lem:canonical-cyclic-cover}, $(W_{i+1},J_{i+1})$ is a Stein filling of $(Y_{g_{i+1}}, \xi_{g_{i+1}})$, and Proposition~\ref{prop:cover-surjection} provides a surjection
\[ p_{i+1}: \pi_1(\Sigma_{g_{i+1}}) \to \pi_1(W_{i+1})/\langle\tau_{i+1}\rangle \]
with $\ker(p_{i+1})=\ker(p_i)$.  Thus $x \in \ker(p_0)$ implies that $x \in \ker(p_i) \subset G_i$ for all $i$.  But since $\bigcap G_i = \{1\}$ it follows that $x\not\in G_k$ for some $k \geq 0$, and this is a contradiction.
\end{proof}

\begin{proof}[Proof of Theorem~\ref{thm:surface-group}]
The circle fiber $\tau$ generates a $\Z/n\Z$ subgroup for some $n \geq 0$ (with $n=0$ if it is nontorsion), so Proposition~\ref{prop:central-extension} provides a short exact sequence of groups
\[ 1 \to \Z/n\Z \to \pi_1(W) \to \pi_1(\Sigma_g) \to 1 \]
for some $n \geq 0$.  We will show that $n=1$, and thus that $\pi_1(W)\to\pi_1(\Sigma_g)$ is an isomorphism.

The homologies of these groups with $\Z$ coefficients are related by the Lyndon/Hochschild-Serre spectral sequence (see e.g.\ \cite{brown}),
\[ E^2_{p,q} = H_p(\pi_1(\Sigma_g); H_q(\Z/n\Z; \Z)) \quad \Longrightarrow \quad H_{p+q}(\pi_1(W); \Z). \]
Since $\pi_1(\Sigma_g)$ has cohomological dimension 2, the $E^2$ page is supported in the interval $0\leq p \leq 2$.  Moreover, the homology of $\Z/n\Z$ is given by (letting $k \geq 1$):
\begin{align*}
H_q(\Z;\Z) &= \begin{cases} \Z, & q=0,1 \\ 0, & q\geq 2,\end{cases} &
H_q(\Z/k\Z;\Z) &= \begin{cases} \Z, & q=0 \\ \Z/k\Z, & q\mathrm{\ odd} \\ 0, & q\geq2 \mathrm{\ even}. \end{cases}
\end{align*}
In either case, the differential $d^2: E^2_{p,q} \to E^2_{p-2,q+1}$ must be identically zero with the possible exception of the map $\delta: E^2_{2,0} \to E^2_{0,1}$, since otherwise either the source or the target vanishes.  Each of the higher differentials $d^r: E^r_{p,q} \to E^r_{p-r,q+r-1}$ must vanish for $r \geq 3$ because either $p>2$ or $p-r<0$, so the spectral sequence collapses at the $E^3$ page, and we have
\begin{align*}
E^\infty_{0,2} &= 0, & E^\infty_{1,1} &= (\Z/n\Z)^{2g}, & E^\infty_{2,0} &= \ker(\delta: \Z \to \Z/n\Z).
\end{align*}
The convergence of this spectral sequence means that these are the associated graded groups of a filtration on $H_2(\pi_1(W);\Z)$.  But the latter group is $\Z$ by Proposition~\ref{prop:stein-hurewicz}, so each associated graded group must be cyclic, and since $E^\infty_{1,1}$ is cyclic we must have $n=1$.
\end{proof}

\subsection{The homotopy type of a Stein filling}

So far we have shown that if $(W,J)$ is a Stein filling of $(Y_g,\xi_g)$, then $W$ has the homology and intersection form of the disk cotangent bundle $DT^*\Sigma_g$ (Theorem~\ref{thm:homology-stein}) and that $\pi_1(W) \cong \pi_1(\Sigma_g)$ (Theorem~\ref{thm:surface-group}), with the circle fiber of $Y_g = \partial W$ being nullhomotopic in $W$.  In this section we will deduce that $W$ is therefore homotopy equivalent, and thus s-cobordant, to $DT^*\Sigma_g$ rel boundary.

\begin{proposition} \label{prop:tau-1-implies-aspherical}
If $(W,J)$ is a Stein filling of $(Y_g,\xi_g)$, then $W$ is aspherical.
\end{proposition}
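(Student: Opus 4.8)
The plan is to show that $\pi_2(W)$ vanishes; combined with the facts that $W$ is homotopy equivalent to a finite $2$-complex (its handle decomposition involves only handles of index $0$, $1$, and $2$) and that $\pi_1(W)\cong\pi_1(\Sigma_g)$ by Theorem~\ref{thm:surface-group}, this forces the universal cover $\widetilde W$ to be contractible, i.e.\ $W$ to be aspherical.

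First I would analyze the cellular chain complex $C_*$ of $\widetilde W$ as a complex of finitely generated free modules over $\Lambda := \mathbb{Z}[\pi_1(W)] = \mathbb{Z}[\pi_1(\Sigma_g)]$, concentrated in degrees $0$, $1$, $2$ (using that $W$ is homotopy equivalent to a finite $2$-complex). Since $\widetilde W$ is simply connected, $H_0(C_*) = \mathbb{Z}$ and $H_1(C_*) = 0$, while the Hurewicz theorem gives $H_2(C_*) = \pi_2(\widetilde W) = \pi_2(W)$. Thus there is an exact sequence of $\Lambda$-modules
\[ 0 \to \pi_2(W) \to C_2 \to C_1 \to C_0 \to \mathbb{Z} \to 0. \]
On the other hand, the standard CW structure on $\Sigma_g$ with one $0$-cell, $2g$ $1$-cells, and one $2$-cell has contractible universal cover, hence yields a finite free resolution $0 \to \Lambda \to \Lambda^{2g} \to \Lambda \to \mathbb{Z} \to 0$ of $\mathbb{Z}$ over $\Lambda$ of length $2$. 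Comparing these two partial resolutions of $\mathbb{Z}$ by (the iterated form of) Schanuel's lemma shows that $\pi_2(W)$ is stably free over $\Lambda$. Comparing $\Lambda$-ranks (well defined since $\Lambda$ has invariant basis number) and using $\chi(W) = 2 - 2g = \chi(\Sigma_g)$ from Corollary~\ref{cor:exact-betti}, the rank of this stably free module comes out to be $0$; equivalently, $[\pi_2(W)] = 0$ in $K_0(\Lambda)$.

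The one step requiring input beyond bookkeeping — and the one I expect to be the main obstacle — is that a stably free $\Lambda$-module of rank $0$ is trivial. I would deduce this from Kaplansky's theorem that the canonical trace on finitely generated projective $\mathbb{C}\pi$-modules (equivalently on idempotent matrices over $\mathbb{C}\pi$) is nonnegative and vanishes only on the zero module: after tensoring with $\mathbb{C}$, the module $\pi_2(W)\otimes_{\mathbb{Z}}\mathbb{C}$ is stably free of rank $0$ over $\mathbb{C}[\pi_1(\Sigma_g)]$, so the canonical trace of its defining idempotent is $0$ and hence $\pi_2(W)\otimes_{\mathbb{Z}}\mathbb{C} = 0$. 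Since $\pi_2(W)$ is a submodule of the free abelian group $C_2$ and is therefore torsion-free, this forces $\pi_2(W) = 0$. (Alternatively one may invoke that $\pi_1(\Sigma_g)$ is bi-orderable, so that $\Lambda$ embeds into a division ring in which a rank-$0$ stably free module becomes $0$; or that $\widetilde K_0(\mathbb{Z}[\pi_1(\Sigma_g)]) = 0$.)

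Finally, with $\pi_2(W) = 0$ in hand: $\widetilde W$ is simply connected, has the homotopy type of a $2$-dimensional complex, and satisfies $H_2(\widetilde W)\cong\pi_2(W) = 0$ and $H_i(\widetilde W) = 0$ for $i\geq 3$, so it is weakly contractible and hence contractible by Whitehead's theorem. Therefore $W$ is aspherical. Every step except the vanishing of rank-$0$ stably free modules over $\mathbb{Z}[\pi_1(\Sigma_g)]$ is a formal consequence of the handle structure of $W$, Schanuel's lemma, and the Euler characteristic computation.
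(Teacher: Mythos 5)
Your argument is correct, and it takes a genuinely different route from the paper. The paper observes that a handle decomposition of $W$ with one $0$-handle, $2g-1+k$ $1$-handles and $k$ $2$-handles gives a presentation of $\pi_1(W)\cong\pi_1(\Sigma_g)$ of deficiency $2g-1 = 1+\beta_1^{(2)}(\pi_1(\Sigma_g))$, and then quotes Hillman's theorem that a presentation of maximal deficiency (relative to the first $L^2$-Betti number) has aspherical presentation $2$-complex. You instead run the chain-level bookkeeping yourself: Schanuel's lemma against the length-$2$ free resolution of $\mathbb{Z}$ coming from $\Sigma_g$ shows $\pi_2(W)$ is stably free of rank $\chi(\Sigma_g)-\chi(W)=0$ over $\mathbb{Z}[\pi_1(\Sigma_g)]$, and you kill it using Kaplansky's theorem that a nonzero idempotent matrix over $\mathbb{C}\pi$ has positive canonical trace (equivalently, that $\mathbb{C}\pi$ is stably finite), together with torsion-freeness of $\pi_2(W)\subset C_2$. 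The two proofs are close in spirit --- Hillman's criterion is itself proved by a von Neumann trace/dimension argument of exactly this flavor --- but yours is more self-contained in its homological bookkeeping and makes the single nontrivial algebraic input explicit, whereas the paper's is shorter at the cost of citing a black box. Both hinge on the same two facts established earlier: $\pi_1(W)\cong\pi_1(\Sigma_g)$ and $\chi(W)=2-2g$. One small caveat: your parenthetical alternative ``$\widetilde K_0(\mathbb{Z}[\pi_1(\Sigma_g)])=0$'' does not by itself do the job --- vanishing of $\widetilde K_0$ says every finitely generated projective is stably free, not that a rank-$0$ stably free module is zero (the latter is stable finiteness, which is exactly what the Kaplansky or Malcev--Neumann arguments supply) --- but this does not affect your main line of reasoning.
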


\begin{proof}
A decomposition of $W$ into handles of index at most 2, with exactly one 0-handle, necessarily has $2g-1+k$ 1-handles and $k$ 2-handles for some $k\geq 1$ since $\chi(W) = 2-2g$.  The corresponding presentation of $\pi_1(W) \cong \pi_1(\Sigma_g)$ has $2g-1+k$ generators and $k$ relations and thus deficiency $2g-1$.

Hillman \cite[Proof of Theorem~2]{hillman-asphericity} showed that if a presentation $P$ of a group $G$ has deficiency $1 + \beta_1(G)$, where $\beta_1$ denotes the first $L^2$-Betti number (see for example \cite{luck}), then the 2-complex corresponding to $P$ is aspherical.  In the above case we know that $\beta_1(\pi_1(\Sigma_g)) = 2g-2$, so the 2-complex corresponding to the given presentation of $\pi_1(\Sigma_g)$ is aspherical, and thus $W$ (which retracts onto this complex) is aspherical as well.
\end{proof}

We have now shown that $W$ is a $K(\pi_1(\Sigma_g),1)$, and so it has the homotopy type of $DT^*\Sigma_g$.  Since both are compact 4-manifolds with boundary, we can strengthen this to an assertion about manifolds rel boundary as follows.

\begin{theorem} \label{thm:homotopy-equivalent}
If $(W,J)$ is a Stein filling of $(Y_g,\xi_g)$, then $W$ is s-cobordant rel boundary to the disk cotangent bundle $DT^*\Sigma_g$.
\end{theorem}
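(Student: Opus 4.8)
The plan is to produce a homotopy equivalence $W \to DT^*\Sigma_g$ that is the identity on the common boundary $Y_g$, to note that it is automatically a \emph{simple} homotopy equivalence, and then to run topological surgery in dimension five to upgrade it to the desired $s$-cobordism.

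First I would construct the homotopy equivalence rel boundary. The circle fiber $t$ is nullhomotopic in $W$ (this is the equality $\tau=1$ established above) and also in $DT^*\Sigma_g$, where it bounds a disk fiber, so each of the boundary inclusions $\pi_1(Y_g)\to\pi_1(W)$ and $\pi_1(Y_g)\to\pi_1(DT^*\Sigma_g)$ factors through the quotient $\pi_1(Y_g)\twoheadrightarrow\pi_1(Y_g)/\langle t\rangle\cong\pi_1(\Sigma_g)$. The induced map $\pi_1(\Sigma_g)\to\pi_1(DT^*\Sigma_g)$ is the isomorphism coming from the deformation retraction $DT^*\Sigma_g\simeq\Sigma_g$, while $\pi_1(\Sigma_g)\to\pi_1(W)$ is a surjection between two copies of the Hopfian group $\pi_1(\Sigma_g)$ (Theorem~\ref{thm:surface-group}) and hence also an isomorphism; composing gives an isomorphism $\pi_1(W)\xrightarrow{\ \sim\ }\pi_1(DT^*\Sigma_g)$ commuting with the two boundary inclusions. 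Since $DT^*\Sigma_g$ is aspherical and $W$ is aspherical by Proposition~\ref{prop:tau-1-implies-aspherical}, this isomorphism is realized by a map $f\colon W\to DT^*\Sigma_g$ which is a homotopy equivalence; using that $Y_g\hookrightarrow W$ is a cofibration and $Y_g$ is aspherical, $f$ may be homotoped so that $f|_{Y_g}=\mathrm{id}_{Y_g}$, and then a standard argument with the long exact sequences of the pairs shows $f$ is a homotopy equivalence of pairs rel $Y_g$. Its Whitehead torsion lies in $\mathrm{Wh}(\pi_1\Sigma_g)$, which vanishes (surface groups are built from free groups by amalgamation over free subgroups, so Waldhausen's vanishing theorems apply), so $f$ is in fact a simple homotopy equivalence rel boundary.

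Next I would pass to surgery. Write $X=DT^*\Sigma_g$; then $f$ represents a class $[W,f]$ in the $s$-cobordism structure set rel boundary $\mathcal{S}^s_\partial(X)$, and it suffices to show $[W,f]=0$. I would use the topological surgery exact sequence relative to $\partial$,
\[
\mathcal{N}_\partial(X\times I)\xrightarrow{\ \theta_5\ }L_5(\Z[\pi_1\Sigma_g])\longrightarrow\mathcal{S}^s_\partial(X)\longrightarrow\mathcal{N}_\partial(X)\xrightarrow{\ \theta_4\ }L_4(\Z[\pi_1\Sigma_g]),
\]
which is valid in dimension four — with $\mathcal{S}^s_\partial$ in place of the homeomorphism structure set — because the surgeries needed to establish its exactness all take place in dimension five (Freedman--Quinn \cite{freedman-quinn}). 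The class $[W,f]$ maps to the normal invariant $\nu(f)\in\mathcal{N}_\partial(X)$, and $\theta_4(\nu(f))$ is the surgery obstruction of the homotopy equivalence $f$, hence $0$. For the surface group $\pi_1(\Sigma_g)$ the $L$-theory assembly maps are isomorphisms — a case of the Farrell--Jones conjecture, $\pi_1(\Sigma_g)$ being hyperbolic for $g\geq 2$, and known classically in any event — so $\theta_4$ and $\theta_5$ may be identified with assembly maps out of $H_*(\Sigma_g;\mathbb{L})$; inspecting the Atiyah--Hirzebruch spectral sequence with $L_0(\Z)=\Z$, $L_2(\Z)=\Z/2$, $L_4(\Z)=\Z$ and $L_{\mathrm{odd}}(\Z)=0$ shows $\theta_4$ is injective and $\theta_5$ is surjective. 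Injectivity of $\theta_4$ forces $\nu(f)=0$, so $[W,f]$ lies in the image of $L_5(\Z[\pi_1\Sigma_g])$; surjectivity of $\theta_5$ makes the map $L_5(\Z[\pi_1\Sigma_g])\to\mathcal{S}^s_\partial(X)$ zero; therefore $[W,f]=0$, which is exactly the assertion that $W$ is $s$-cobordant rel boundary to $DT^*\Sigma_g$.

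The main obstacle is the surgery-theoretic input above: one must be careful about invoking the surgery exact sequence in dimension four and relative to the boundary — it is precisely the four-dimensional $s$-cobordism theorem, open for surface groups, that blocks strengthening the conclusion to a homeomorphism — and one must pin down the relevant $L$-groups of $\Z[\pi_1\Sigma_g]$ and the obstruction maps $\theta_4,\theta_5$ well enough to conclude that $\nu(f)$ vanishes and that $[W,f]$ is killed. By comparison, constructing the homotopy equivalence rel boundary and checking $\mathrm{Wh}(\pi_1\Sigma_g)=0$ are routine.
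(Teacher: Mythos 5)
Your argument reaches the right conclusion, and it is essentially a from-scratch proof of the black box that the paper invokes: rather than running the surgery machinery directly, the paper cites Khan \cite{khan-decompositions} for the statement that a compact aspherical $4$-manifold with surface fundamental group is \emph{topologically s-rigid}, which is exactly the assertion that a homotopy equivalence rel boundary can be promoted to an s-cobordism; the content of that citation is the assembly-map computation you carry out by hand. The construction of the homotopy equivalence also differs in flavor but not in substance: the paper builds a map $DT^*\Sigma_g \to W$ handle by handle (turning the standard handle decomposition upside down, extending over the $2$-handle using $\tau=1$ and over the $3$- and $4$-handles using $\pi_2(W)=\pi_3(W)=0$), whereas you use asphericity of both sides and obstruction theory. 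Both work. Your route buys self-containedness; the paper's buys not having to certify the delicate four-dimensional surgery statements itself.

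On that last point, one claim is stated too strongly and is the genuine soft spot. The displayed surgery sequence is \emph{not} known to be exact in its entirety for arbitrary fundamental groups, even with $\mathcal{S}^s_\partial$ in place of the homeomorphism structure set: exactness at $\mathcal{N}_\partial(X)$ (a normal map with vanishing obstruction is normally bordant rel boundary to a homotopy equivalence) is honest four-dimensional surgery and is only available for good groups \cite{freedman-quinn}, which surface groups are not known to be; likewise Wall realization of $L_5$-classes by normal cobordisms starting from a $4$-manifold is unavailable, so the ``$L_5$-action'' on $\mathcal{S}^s_\partial(X)$ should not be taken for granted. Your proof, however, only needs the pieces that do hold for all groups, and you should say so explicitly: (i) $\theta_4$ is injective, which is a property of a map and needs no exactness, so $\nu(f)=0$ and there is a normal cobordism $V^5$ from $(W,f)$ to $(X,\mathrm{id})$ rel $\partial X\times I$; (ii) because the assembly map $H_5(X,\partial X;\mathbb{L})\to L_5(\Z[\pi_1\Sigma_g])$ is onto, one can glue onto $V$ a normal cobordism over $X\times I$ realizing $-\theta(V)$, and then Wall's odd-dimensional surgery on the interior of the resulting $5$-manifold (valid topologically in dimension five for all fundamental groups) converts it into an s-cobordism. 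Phrased this way the surgery step is complete; the blanket appeal to exactness of the whole sequence, as written, is not justified.
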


\begin{proof}
It suffices to find a homotopy equivalence $f: DT^*\Sigma_g \to W$ which restricts to a homeomorphism $\partial(DT^*\Sigma_g) \xrightarrow{\sim} \partial W$.  Since $W$ is compact and aspherical with $\pi_1(W)$ a surface group, Khan \cite[Corollary~1.23]{khan-decompositions} showed that $W$ is \emph{topologically s-rigid}, a condition which implies that if such an $f$ exists then $DT^*\Sigma_g$ is s-cobordant to $W$.

To construct $f: DT^*\Sigma_g \to W$, following Stipsicz \cite{stipsicz}, we first take a standard handlebody decomposition of $DT^*\Sigma_g$, with a 0-handle, $2g$ 1-handles, and a single 2-handle, and turn it upside down to build $DT^*\Sigma_g$ from a thickened $Y_g$ by attaching a 2-handle, $2g$ 3-handles, and a 4-handle.   We define $f$ by identifying the boundaries, $\partial (DT^*\Sigma_g) \xrightarrow{\sim} \partial W$, in a way which sends a circle fiber to a circle fiber; extending $f$ over the 2-handle of $DT^*\Sigma_g$, which can be done since the attaching curve is identified with the circle fiber in $Y_g = \partial W$ and is thus nullhomotopic in $W$; and then extending $f$ over the 3- and 4-handles of $DT^*\Sigma_g$, since the obstructions to doing so lie in $\pi_2(W)=0$ and $\pi_3(W)=0$.

The map $f$ which we have constructed now induces an isomorphism $f_*: \pi_1(DT^*\Sigma_g) \to \pi_1(W)$, since it induces an isomorphism $\pi_1(\partial(DT^*\Sigma_g)) \xrightarrow{\sim} \pi_1(\partial W)$ which preserves the subgroup generated by the circle fiber, and both groups are quotients of $\pi_1(Y_g)$ by that subgroup.  Moreover, $f$ induces an isomorphism on all higher homotopy groups, since these are identically zero, and so $f$ is a homotopy equivalence by Whitehead's theorem.
\end{proof}

\bibliographystyle{alpha}
\bibliography{References}

\begin{thebibliography}{PVHM10}

\bibitem[Ago08]{agol}
Ian Agol.
\newblock Criteria for virtual fibering.
\newblock {\em J. Topol.}, 1(2):269--284, 2008.

\bibitem[Bau08]{bauer}
Stefan Bauer.
\newblock Almost complex 4-manifolds with vanishing first {C}hern class.
\newblock {\em J. Differential Geom.}, 79(1):25--32, 2008.

\bibitem[Bro82]{brown}
Kenneth~S. Brown.
\newblock {\em Cohomology of groups}, volume~87 of {\em Graduate Texts in
  Mathematics}.
\newblock Springer-Verlag, New York-Berlin, 1982.

\bibitem[Etn04]{etnyre}
John~B. Etnyre.
\newblock Planar open book decompositions and contact structures.
\newblock {\em Int. Math. Res. Not.}, (79):4255--4267, 2004.

\bibitem[FQ90]{freedman-quinn}
Michael~H. Freedman and Frank Quinn.
\newblock {\em Topology of 4-manifolds}, volume~39 of {\em Princeton
  Mathematical Series}.
\newblock Princeton University Press, Princeton, NJ, 1990.

\bibitem[Fre82]{freedman}
Michael~Hartley Freedman.
\newblock The topology of four-dimensional manifolds.
\newblock {\em J. Differential Geom.}, 17(3):357--453, 1982.

\bibitem[FS95]{fs-blowup}
Ronald Fintushel and Ronald~J. Stern.
\newblock Immersed spheres in {$4$}-manifolds and the immersed {T}hom
  conjecture.
\newblock {\em Turkish J. Math.}, 19(2):145--157, 1995.

\bibitem[Gir01]{giroux-circle-bundles}
Emmanuel Giroux.
\newblock Structures de contact sur les vari\'et\'es fibr\'ees en cercles
  au-dessus d'une surface.
\newblock {\em Comment. Math. Helv.}, 76(2):218--262, 2001.

\bibitem[Hem72]{hempel}
John Hempel.
\newblock Residual finiteness of surface groups.
\newblock {\em Proc. Amer. Math. Soc.}, 32:323, 1972.

\bibitem[Hil97]{hillman-asphericity}
Jonathan~A. Hillman.
\newblock On {$L^2$}-homology and asphericity.
\newblock {\em Israel J. Math.}, 99:271--283, 1997.

\bibitem[Hin00]{hind-rp3}
Richard Hind.
\newblock Holomorphic filling of {${\bf R}{\rm P}^3$}.
\newblock {\em Commun. Contemp. Math.}, 2(3):349--363, 2000.

\bibitem[Kal13]{kaloti}
Amey Kaloti.
\newblock Stein fillings of planar open books.
\newblock arXiv:1311.0208, 2013.

\bibitem[Kha12]{khan-decompositions}
Qayum Khan.
\newblock Homotopy invariance of 4-manifold decompositions: connected sums.
\newblock {\em Topology Appl.}, 159(16):3432--3444, 2012.

\bibitem[Li06a]{li-quaternionic}
Tian-Jun Li.
\newblock Quaternionic bundles and {B}etti numbers of symplectic 4-manifolds
  with {K}odaira dimension zero.
\newblock {\em Int. Math. Res. Not.}, pages Art. ID 37385, 28, 2006.

\bibitem[Li06b]{li-kodaira-zero}
Tian-Jun Li.
\newblock Symplectic 4-manifolds with {K}odaira dimension zero.
\newblock {\em J. Differential Geom.}, 74(2):321--352, 2006.

\bibitem[Lis08]{lisca-lens}
Paolo Lisca.
\newblock On symplectic fillings of lens spaces.
\newblock {\em Trans. Amer. Math. Soc.}, 360(2):765--799 (electronic), 2008.

\bibitem[LMY14]{lmy}
Tian-Jun Li, Cheuk~Yu Mak, and Kouichi Yasui.
\newblock Uniruled caps and {C}alabi-{Y}au caps.
\newblock arXiv:1412.3208, 2014.

\bibitem[LS91]{lalonde-sikorav}
Fran{\c{c}}ois Lalonde and Jean-Claude Sikorav.
\newblock Sous-vari\'et\'es lagrangiennes et lagrangiennes exactes des fibr\'es
  cotangents.
\newblock {\em Comment. Math. Helv.}, 66(1):18--33, 1991.

\bibitem[L{\"u}c02]{luck}
Wolfgang L{\"u}ck.
\newblock {\em {$L^2$}-invariants: theory and applications to geometry and
  {$K$}-theory}, volume~44 of {\em Ergebnisse der Mathematik und ihrer
  Grenzgebiete. 3. Folge. A Series of Modern Surveys in Mathematics [Results in
  Mathematics and Related Areas. 3rd Series. A Series of Modern Surveys in
  Mathematics]}.
\newblock Springer-Verlag, Berlin, 2002.

\bibitem[Lut83]{lutz}
Robert Lutz.
\newblock Structures de contact et syst\`emes de {P}faff \`a pivot.
\newblock In {\em Third {S}chnepfenried geometry conference, {V}ol. 1
  ({S}chnepfenried, 1982)}, volume 107 of {\em Ast\'erisque}, pages 175--187.
  Soc. Math. France, Paris, 1983.

\bibitem[McD90]{mcduff-rational-ruled}
Dusa McDuff.
\newblock The structure of rational and ruled symplectic {$4$}-manifolds.
\newblock {\em J. Amer. Math. Soc.}, 3(3):679--712, 1990.

\bibitem[McD91]{mcduff-disconnected}
Dusa McDuff.
\newblock Symplectic manifolds with contact type boundaries.
\newblock {\em Invent. Math.}, 103(3):651--671, 1991.

\bibitem[MS97]{morgan-szabo}
John~W. Morgan and Zolt{\'a}n Szab{\'o}.
\newblock Homotopy {$K3$} surfaces and mod {$2$} {S}eiberg-{W}itten invariants.
\newblock {\em Math. Res. Lett.}, 4(1):17--21, 1997.

\bibitem[OO05]{ohta-ono}
Hiroshi Ohta and Kaoru Ono.
\newblock Simple singularities and symplectic fillings.
\newblock {\em J. Differential Geom.}, 69(1):1--42, 2005.

\bibitem[Pol91]{polterovich}
L.~Polterovich.
\newblock The surgery of {L}agrange submanifolds.
\newblock {\em Geom. Funct. Anal.}, 1(2):198--210, 1991.

\bibitem[PVHM10]{pvhm}
Olga Plamenevskaya and Jeremy Van Horn-Morris.
\newblock Planar open books, monodromy factorizations and symplectic fillings.
\newblock {\em Geom. Topol.}, 14(4):2077--2101, 2010.

\bibitem[Sei08]{seidel}
Paul Seidel.
\newblock {\em Fukaya categories and {P}icard-{L}efschetz theory}.
\newblock Zurich Lectures in Advanced Mathematics. European Mathematical
  Society (EMS), Z\"urich, 2008.

\bibitem[Sta65]{stallings-homology}
John Stallings.
\newblock Homology and central series of groups.
\newblock {\em J. Algebra}, 2:170--181, 1965.

\bibitem[Sta15]{starkston}
Laura Starkston.
\newblock Symplectic fillings of {S}eifert fibered spaces.
\newblock {\em Trans. Amer. Math. Soc.}, 367(8):5971--6016, 2015.

\bibitem[Sti02]{stipsicz}
Andr{\'a}s~I. Stipsicz.
\newblock Gauge theory and {S}tein fillings of certain 3-manifolds.
\newblock {\em Turkish J. Math.}, 26(1):115--130, 2002.

\bibitem[Tau95]{taubes-more}
Clifford~Henry Taubes.
\newblock More constraints on symplectic forms from {S}eiberg-{W}itten
  invariants.
\newblock {\em Math. Res. Lett.}, 2(1):9--13, 1995.

\bibitem[Tau96]{taubes-sw-gr}
Clifford~H. Taubes.
\newblock {${\rm SW}\Rightarrow{\rm Gr}$}: from the {S}eiberg-{W}itten
  equations to pseudo-holomorphic curves.
\newblock {\em J. Amer. Math. Soc.}, 9(3):845--918, 1996.

\bibitem[Wen10]{wendl}
Chris Wendl.
\newblock Strongly fillable contact manifolds and {$J$}-holomorphic foliations.
\newblock {\em Duke Math. J.}, 151(3):337--384, 2010.

\bibitem[Wen14]{wendl-blog}
Chris Wendl.
\newblock A biased survey on symplectic fillings, part 7 (maximal elements and
  co-fillability).
\newblock
  \url{https://symplecticfieldtheorist.wordpress.com/2014/12/29/a-biased-survey-on-symplectic-fillings-part-7-maximal-elements-and-co-fillability/},
  2014.

\end{thebibliography}

\end{document}